\documentclass[11pt]{article}

\usepackage{amsfonts}
\usepackage{amsmath}
\usepackage{graphicx}
\textheight 205 true mm \textwidth  160 true mm \oddsidemargin
2.5 true mm \evensidemargin 2.5  true mm\textheight 205 true mm \textwidth  150 true mm \oddsidemargin
2.5true mm \evensidemargin 2.5 true mm

%
%

\newcommand{\beq}{\begin{equation}}
\newcommand{\eeq}{\end{equation}}
\newcommand{\bea}{\begin{eqnarray}}
\newcommand{\eea}{\end{eqnarray}}
\newcommand{\beas}{\begin{eqnarray*}}
\newcommand{\eeas}{\end{eqnarray*}}

%
%
\newtheorem{theorem}{Theorem}[section]
\newtheorem{definition}[theorem]{Definition}
\newtheorem{proposition}[theorem]{Proposition}
\newtheorem{corollary}[theorem]{Corollary}
\newtheorem{lemma}[theorem]{Lemma}
\newtheorem{remark}[theorem]{Remark}
\newtheorem{example}[theorem]{Example}
\newtheorem{examples}[theorem]{Examples}
\newtheorem{foo}[theorem]{Remarks}

%
%
\newenvironment{proof}{\addvspace{\medskipamount}\par\noindent{\it
Proof}.}
{\unskip\nobreak\hfill$\Box$\par\addvspace{\medskipamount}}





\parindent=0pt

\title{Stochastic Taylor expansions and heat kernel asymptotics}

\author{Fabrice Baudoin
\\
{\small Department of Mathematics} \\
{\small Purdue University} \\
{\small fbaudoin@math.purdue.edu} \\
}

\begin{document}

\newpage 

\maketitle 

\begin{abstract}
These notes focus on the applications of the stochastic Taylor expansion of solutions of stochastic differential equations to the study of heat kernels in small times. As an illustration of these methods we provide a new heat kernel proof of the Chern-Gauss-Bonnet theorem.
\end{abstract}

\tableofcontents

\newpage

\section{Introduction}

The purpose of these notes is to provide to the reader an introduction to the theory of stochastic Taylor expansions with a view toward the study of heat kernels. They correspond to a five hours course given at a Spring school in June 2009.

\

In the first Section, we remind some basic facts about stochastic differential equations and introduce the language of vector fields. In the second Section, which is the heart of this course, we study stochastic Taylor expansions by means of the so-called formal Chen series. Parts of this section may be found in my book \cite{Bau}, but the proofs given in these notes are different and maybe more intuitive.  In the third Section, we focus on the applications of study stochastic Taylor expansions to the study of the asymptotics in small times of heat kernels associated with elliptic diffusion operators. In the fourth Section, we extend the results of the third Section, to study heat kernels on vector bundles and provide a new proof of the Chern-Gauss-Bonnet theorem.

\section{Stochastic differential equations: The language of vector fields}

In this section we remind some preliminary results and definitions that will be used throughout the text. We focus on the connection between parabolic linear diffusion equations and stochastic differential equations and introduce the language of vector fields which is the most convenient when dealing with applications to geometry. 

For further reading on the connection between diffusion equations and stochastic differential equations we refer to the book by Stroock and Varadhan \cite{Str-Va}, where the proofs of the below cited results may be found. For further reading on vector fields we refer to the Chapter 1 of \cite{Tay1} and for more explanations on the use of the language of vector fields for stochastic differential equations, we refer to the Chapter 1 of \cite{Hsu}.

\subsection{Heat kernels}

A diffusion operator $L$ on $\mathbb{R}^n$ is a second order differential operator that can be written
\[
L=\frac{1}{2}\sum_{i,j=1}^n a_{ij} (x) \frac{\partial^2}{ \partial x_i \partial x_j} +\sum_{i=1}^n b_i (x)\frac{\partial}{\partial x_i},
\]
where $b_i$ and $a_{ij}$ are continuous functions on $\mathbb{R}^n$ such that for every $x \in \mathbb{R}^n$, the matrix $(a_{ij}(x))_{1\le i,j\le n}$ is symmetric and positive.

Associated to $L$, we may consider the following diffusion equation
\[
\frac{\partial \Phi}{\partial t}=L\Phi, \quad \Phi (0,x)=f(x).
\]
The function $\Phi: [0,+\infty)\times \mathbb{R}^n \rightarrow \mathbb{R}$ is the unknown of the equation and the function $f :  \mathbb{R}^n \rightarrow \mathbb{R}$ is the initial datum.

Under mild conditions on the coefficients $b_i$ and $a_{ij}$ (for instance $C^\infty$ bounded), it is well known that the above equation has one and only one solution $\Phi$. It is usual to use the notation 
\[
\Phi (t,x)=\mathbf{P}_t f (x)
\]
to stress how the solution $\Phi$ depends on the function $f$. The family of operators $(\mathbf{P}_t)_{t \ge 0}$ (acting on a convenient space of initial data\footnote{In probability theory, it is common to work with the space of bounded Borel functions}) is called the heat semigroup associated to the diffusion operator $L$. The terminology semigroups stems from the following easily checked property 
\[
\mathbf{P}_{t+s}=\mathbf{P}_t \mathbf{P}_s.
\]

The diffusion operator $L$ is said to be elliptic at a point $x_0 \in \mathbb{R}^n$ if  the matrix $(a_{ij}(x_0))_{1\le i,j\le n}$ is invertible. We shall simply say that $L$ is elliptic if it is elliptic at any point. 

If $L$ is an elliptic diffusion operator, the heat semigroup associated to it admits the following integral representation
\[
\mathbf{P}_t f (x)=\int_{\mathbb{R}^n} p_t (x,y) f(y) dy, \quad t>0,
\]
where $p:(0,+\infty)\times \mathbb{R}^n \times \mathbb{R}^n \rightarrow \mathbb{R}$ is a smooth function that is called the heat kernel associated to $L$.

\subsection{Stochastic differential equations and diffusion equations}

Stochastic differential equations provide a powerful tool to study diffusion equations and associated heat kernels. Let us briefly recall below the main connection between these two types of equations.

Let
\[
L=\frac{1}{2}\sum_{i,j=1}^n a_{ij} (x) \frac{\partial^2}{ \partial x_i \partial x_j} +\sum_{i=1}^n b_i (x)\frac{\partial}{\partial x_i},
\]
be a diffusion operator on $\mathbb{R}^n$.
Since the matrix $a$ is symmetric and positive, it admits a square root, that is, there exists a symmetric and positive matrix $\sigma$ such that
\[
\sigma^2=a.
\]
Let us consider a filtered probability space  $\left( \Omega , (\mathcal{F}_t)_{t \geq 0} , \mathcal{F},
\mathbb{P} \right)$  which satisfies the usual conditions and on which is defined a $n$-dimensional Brownian motion $(B_t)_{t \ge 0}$.

The following theorem is well known:

\begin{theorem}\label{SDE}
Let us assume that $b$ and $\sigma$ are smooth, and that their derivatives of any order are bounded.

Then, for every $x_0 \in \mathbb{R}^n$, there exists a unique and adapted process $(X_t^{x_0})_{t\ge 0}$  such that for $t \ge 0$
\begin{equation} \label{EDS}
X_t^{x_0} =x_0 +\int_0^t b(X_s^{x_0}) ds + \int_0^t \sigma
(X_s^{x_0}) dB_s.
\end{equation}

\

Moreover, if $f:\mathbb{R}^n \rightarrow \mathbb{R}$ is a smooth and compactly supported function, then the function 
\[
\phi (t,x)=\mathbb{E} \left( f(X_t^x) \right)=\mathbf{P}_t f (x)
\]
is the unique bounded solution of the diffusion equation
\[
\frac{\partial \phi}{\partial t}(t,x)= L \phi (t,x), \quad \phi(0,x)=f(x).
\]
\end{theorem}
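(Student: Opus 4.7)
The proof plan is to establish existence/uniqueness of the SDE by a standard Picard iteration argument, and then connect the solution to the PDE via Itô's formula, with the main technical hurdle being the regularity of $\phi$ in $x$.

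For existence and uniqueness, I would exploit the hypothesis that $b$ and $\sigma$ have bounded first derivatives, which makes them globally Lipschitz. Fix $T>0$ and work in the Banach space of adapted continuous processes with norm $\N{X}^2 = \mathbb{E}[\sup_{t\le T} |X_t|^2]$. Define iterates $X^{(0)}_t = x_0$ and $X^{(n+1)}_t = x_0 + \int_0^t b(X_s^{(n)})\, ds + \int_0^t \sigma(X_s^{(n)})\, dB_s$. Using Doob's $L^2$-inequality on the martingale part and Cauchy--Schwarz on the drift, combined with the Lipschitz bound, one obtains $\mathbb{E}[\sup_{s\le t}|X_s^{(n+1)}-X_s^{(n)}|^2] \le C\int_0^t \mathbb{E}[\sup_{u\le s}|X_u^{(n)}-X_u^{(n-1)}|^2]\, ds$, and iteration gives a geometric bound in $t$, so the sequence is Cauchy and converges to a solution. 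Uniqueness follows from the same estimate plus Gronwall.

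For the PDE characterization, apply Itô's formula to $f(X_t^x)$:
\[
f(X_t^x) = f(x) + \int_0^t Lf(X_s^x)\, ds + \int_0^t \langle \nabla f(X_s^x), \sigma(X_s^x) dB_s\rangle.
\]
Since $f$ is smooth with compact support and $\sigma$ is smooth with bounded derivatives, the integrand of the stochastic integral is bounded, hence the stochastic integral is a true martingale of mean zero. Taking expectations yields $\phi(t,x) = f(x) + \int_0^t \mathbb{E}[Lf(X_s^x)]\, ds$, which gives $\partial_t \phi(t,x) = \mathbb{E}[Lf(X_t^x)]$, at least in a weak sense, and continuity in $t$ is immediate.

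The hard step is showing that $\mathbb{E}[Lf(X_t^x)] = L\phi(t,x)$, i.e., that one may commute $L$ with the expectation. This requires $\phi(t,\cdot)$ to be $C^2$ in $x$ with appropriate growth. My plan is to establish smoothness of the stochastic flow $x\mapsto X_t^x$: by differentiating the SDE formally, the Jacobian $J_t = \partial X_t^x/\partial x$ solves a linear SDE driven by $b'(X_t^x)$ and $\sigma'(X_t^x)$, whose coefficients are bounded; a standard Picard argument (analogous to Step~1) shows $J_t$ exists in every $L^p$, and iterating yields higher derivatives. Then differentiation under the expectation sign is justified by dominated convergence using the $L^p$ bounds on the derivatives of the flow together with the compact support of $f$. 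An alternative and cleaner route, which avoids proving smoothness directly, is to invoke the Markov property: $\phi(t+h,x) = \mathbb{E}[\phi(t,X_h^x)]$, and once $\phi(t,\cdot)$ is known to be $C^2$, applying Itô to the inner function and letting $h\to 0^+$ gives $\partial_t \phi = L\phi$. Uniqueness of bounded solutions is then standard, following from the martingale property of $\phi(T-t, X_t^x)$ for any smooth bounded solution, evaluated at $t=T$.
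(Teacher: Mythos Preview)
The paper does not actually prove this theorem: it is stated as ``well known'' and the reader is referred to Stroock--Varadhan for the proofs of the results cited in this preliminary section. So there is no ``paper's own proof'' to compare against.

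That said, your outline is the standard textbook route and is essentially correct. The Picard iteration under global Lipschitz hypotheses (guaranteed here by bounded first derivatives) is the usual argument for existence and pathwise uniqueness. For the PDE part, your use of It\^o's formula to obtain $\partial_t \phi(t,x) = \mathbb{E}[Lf(X_t^x)] = \mathbf{P}_t(Lf)(x)$ is fine, and you correctly identify that the remaining issue is the commutation $\mathbf{P}_t(Lf) = L(\mathbf{P}_t f)$, which needs $C^2$-regularity of $\phi(t,\cdot)$. Your plan to get this from smoothness of the stochastic flow $x \mapsto X_t^x$ (via the linear variational SDE with bounded coefficients and $L^p$ bounds on its solution) is the standard approach and works under the stated hypotheses; the Markov-property alternative you mention is also fine but still presupposes the $C^2$ regularity you are trying to establish, so it is not really a shortcut. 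The uniqueness argument via the martingale $\phi(T-t, X_t^x)$ is correct. Nothing is missing at the level of a sketch.
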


\subsection{The language of vector fields}

For geometric purposes, it is often more useful to use Stratonovitch integrals and the language of vector fields in the study of stochastic differential equations.

Let $\mathcal{O} \subset \mathbb{R}^n$ be a non empty open set. A
smooth vector field $V$ on $\mathcal{O}$ is a smooth map
\[
V:  \mathcal{O}  \rightarrow   \mathbb{R}^{n} 
\]
\[
 x  \rightarrow  (v_{1}(x),...,v_{n}(x)).
\]

A vector field $V$ defines a differential operator acting on the
smooth functions $f: \mathcal{O} \rightarrow \mathbb{R}$ as
follows:
\[
(Vf) (x)=\sum_{i=1}^n v_i (x)
\frac{\partial f}{\partial x_i}.
\]
We note that $V$ is a derivation, that is a map on
$\mathcal{C}^{\infty} (\mathcal{O} , \mathbb{R} )$, linear over
$\mathbb{R}$, satisfying for $f,g \in \mathcal{C}^{\infty}
(\mathcal{O} , \mathbb{R} )$,
\[
V(fg)=(Vf)g +f (Vg).
\]
Conversely, it may be shown that any derivation on
$\mathcal{C}^{\infty} (\mathcal{O} , \mathbb{R} )$ is a vector
field.

With these notations, it is readily checked that if $V_0,V_1,\cdots, V_d$ are smooth vector fields on $\mathbb{R}^n$, then the second order differential operator
\[
L=V_0+\frac{1}{2}\sum_{i=1}^d V_i^2
\]
is a diffusion operator. Though it is always locally true, in general, a diffusion operator may not necessarily be globally  written under the above form. If this is the case, the operator is said to be a H\"ormander's type operator. We may observe as an easy exercise that the operator is elliptic if and only if for every $x \in \mathbb{R}^n$, the linear space generated by the vectors $V_1(x),\cdots,V_d(x)$ is equal to $\mathbb{R}^n$.

To associate with $L$ a stochastic differential equation it is more convenient to use Stratonovitch integration than It\^o's. Let us recall that if $(X_t)_{t \ge 0}$ and $(Y_t)_{t \ge 0}$ are two continuous semimartingales, the Stratonovitch integral of $Y$ against $X$  may be defined by
\[
\int Y \circ dX=\int Y  dX+\frac{1}{2} \langle X,Y \rangle,
\]
where $\langle X,Y \rangle$ denotes the quadratic covariation between $X$ and $Y$.

With this language, we have the following translation of Theorem \ref{SDE}:

\begin{theorem}
Let $(B_t)_{t \ge 0}$ be a $d$-dimensional Brownian motion.
Let us assume that $V_0,V_1,\cdots, V_d$ are smooth vector fields on $\mathbb{R}^n$, and that their derivatives of any order are bounded.

Then, for every $x_0 \in \mathbb{R}^n$, there exists a unique and adapted process $(X_t^{x_0})_{t\ge 0}$  such that for $t \ge 0$
\begin{equation} \label{EDS2}
X_t^{x_0} =x_0 +\int_0^t V_0(X_s^{x_0}) ds +\sum_{i=1}^d \int_0^t V_i
(X_s^{x_0}) \circ dB^i_s.
\end{equation}

\

Moreover, if $f:\mathbb{R}^n \rightarrow \mathbb{R}$ is a smooth and compactly supported function, then the function 
\[
\phi (t,x)=\mathbb{E} \left( f(X_t^x) \right)=\mathbf{P}_t f (x)
\]
is the unique bounded solution of the diffusion equation
\[
\frac{\partial \phi}{\partial t}(t,x)= L \phi (t,x), \quad \phi(0,x)=f(x).
\]
where
\[
L=V_0+\frac{1}{2}\sum_{i=1}^d V_i^2.
\]
\end{theorem}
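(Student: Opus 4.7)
The plan is to reduce this result to Theorem \ref{SDE} by converting the Stratonovich SDE \eqref{EDS2} into Itô form, applying the earlier theorem, and then checking that the resulting Itô generator coincides with $L=V_0+\frac{1}{2}\sum_{i=1}^d V_i^2$.

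First, I would package the vector fields into a matrix. Writing $V_i(x)=(V_i^1(x),\dots,V_i^n(x))$ for $i=0,1,\dots,d$, define the $n\times d$ matrix $\sigma(x)$ whose $i$-th column is $V_i(x)$. Under the smoothness assumption with all derivatives bounded, so do $\sigma$ and its columns. Applying the standard Itô--Stratonovich conversion to each component of \eqref{EDS2}, one obtains the Itô SDE
\[
X_t^{x_0}=x_0+\int_0^t b(X_s^{x_0})\,ds+\int_0^t\sigma(X_s^{x_0})\,dB_s,
\]
where the corrected drift is $b(x)=V_0(x)+\frac{1}{2}\sum_{i=1}^d (DV_i\cdot V_i)(x)$, with $k$-th component $b_k(x)=V_0^k(x)+\frac{1}{2}\sum_{i,j}V_i^j(x)\partial_j V_i^k(x)$. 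The correction term comes from the quadratic covariation $\langle V_i(X), B^i\rangle$, and because $V_i$ and $\partial_j V_i^k$ are bounded with bounded derivatives, $b$ meets the hypotheses of Theorem \ref{SDE}. That theorem then gives existence and uniqueness of an adapted solution, and identifies $\phi(t,x)=\mathbb{E}(f(X_t^x))$ as the unique bounded solution of $\partial_t\phi=\tilde L\phi$, $\phi(0,\cdot)=f$, with
\[
\tilde L=\frac{1}{2}\sum_{k,\ell}a_{k\ell}(x)\frac{\partial^2}{\partial x_k\partial x_\ell}+\sum_k b_k(x)\frac{\partial}{\partial x_k},\qquad a(x)=\sigma(x)\sigma(x)^T.
\]

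The remaining step is the algebraic identity $\tilde L=L$. Since $a_{k\ell}(x)=\sum_{i=1}^d V_i^k(x)V_i^\ell(x)$, a direct expansion of $V_i^2 f=V_i(V_i f)$ yields
\[
V_i^2 f=\sum_{k,\ell}V_i^k V_i^\ell\,\frac{\partial^2 f}{\partial x_k\partial x_\ell}+\sum_{k,\ell}V_i^k(\partial_k V_i^\ell)\frac{\partial f}{\partial x_\ell}.
\]
Summing over $i$ and halving recovers exactly the second-order part of $\tilde L$ together with the Stratonovich correction in the drift, so $V_0+\frac{1}{2}\sum_i V_i^2=\tilde L$.

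The only non-routine point is the bookkeeping in this last identification between $\tilde L$ and $L$; everything else is a direct application of Theorem \ref{SDE} after the Itô--Stratonovich translation. I do not anticipate a genuine obstacle, but I would be careful in the conversion step to make sure the covariation term is computed componentwise from $d V_i^k(X_s)=\sum_j\partial_j V_i^k(X_s)\,dX_s^j$ so that the coefficient of $ds$ is $\sum_{j}\partial_j V_i^k(X_s)V_i^j(X_s)$, matching precisely the first-order terms generated by $\frac{1}{2}V_i^2$.
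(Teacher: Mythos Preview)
Your approach is correct and is precisely the ``translation'' the paper has in mind: the theorem is stated without proof as a reformulation of Theorem~\ref{SDE}, and the It\^o--Stratonovich conversion together with the algebraic identification $\tilde L = V_0+\frac{1}{2}\sum_i V_i^2$ is exactly how one passes from one to the other. The only cosmetic mismatch is that Theorem~\ref{SDE} is phrased with a square symmetric $\sigma$ and an $n$-dimensional Brownian motion, whereas here $\sigma$ is $n\times d$; this is a harmless generalization and does not affect the argument.
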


The main advantage of this language is the following simple change of variable formula which is nothing else but the celebrated It\^o's formula:

\begin{proposition}
If  $f:\mathbb{R}^n \rightarrow \mathbb{R}$ is a smooth function and $(X_t^{x_0})_{t\ge 0}$ the solution of (\ref{EDS2}), then the following It\^o's formula holds
\[
f(X_t^{x_0}) =f(x_0) +\int_0^t V_0 f(X_s^{x_0}) ds +\sum_{i=1}^d \int_0^t V_i f(X_s^{x_0}) \circ dB^i_s.
\]
\end{proposition}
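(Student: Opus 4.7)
My plan is to derive this Stratonovich Itô formula by first passing to the Itô formulation of the SDE, then applying the classical Itô formula, and finally converting the resulting Itô stochastic integrals back to Stratonovich ones. The advertised identity will emerge from the cancellation between the two families of second-order correction terms.

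First I would translate the Stratonovich SDE (\ref{EDS2}) into its Itô form. By definition of the Stratonovich integral, $\int_0^t V_i(X_s)\circ dB^i_s = \int_0^t V_i(X_s)\,dB^i_s + \tfrac{1}{2}\langle V_i(X),B^i\rangle_t$, and a componentwise application of the classical Itô formula to $v_i^k(X)$ yields $d\langle V_i(X)^k,B^i\rangle_s = (V_i v_i^k)(X_s)\,ds$. Hence the Itô version of the SDE reads, in coordinates,
\[
dX_t^k = \Big(v_0^k + \tfrac{1}{2}\sum_{i=1}^d V_i v_i^k\Big)(X_t)\,dt + \sum_{i=1}^d v_i^k(X_t)\,dB^i_t,
\qquad d\langle X^k,X^l\rangle_t = \sum_{i=1}^d v_i^k v_i^l(X_t)\,dt.
\]

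Next I would apply the classical Itô change of variables formula to $f(X_t)$ and regroup terms by derivatives of $f$. The drift contribution from $dX^k$ collapses into $V_0 f + \tfrac{1}{2}\sum_{i,k}(V_i v_i^k)\partial_k f$, and the second-order term gives $\tfrac{1}{2}\sum_{i,k,l} v_i^k v_i^l\,\partial_k\partial_l f$. The key algebraic identity, obtained by directly expanding $V_i(V_i f)=\sum_k v_i^k\partial_k(\sum_l v_i^l\partial_l f)$, is
\[
\sum_{k,l} v_i^k v_i^l\,\partial_k\partial_l f \;=\; V_i^2 f \;-\; \sum_k (V_i v_i^k)\,\partial_k f,
\]
and it forces the two drift corrections to annihilate, leaving the coordinate-free Itô identity
\[
f(X_t)=f(x_0)+\int_0^t\Big(V_0 f+\tfrac{1}{2}\sum_{i=1}^d V_i^2 f\Big)(X_s)\,ds+\sum_{i=1}^d\int_0^t V_i f(X_s)\,dB^i_s.
\]

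Finally, I would re-Stratonovich the noise integrals. Since $d\langle V_i f(X),B^i\rangle_s = V_i^2 f(X_s)\,ds$, converting each $\int_0^t V_i f(X_s)\,dB^i_s$ into $\int_0^t V_i f(X_s)\circ dB^i_s - \tfrac{1}{2}\int_0^t V_i^2 f(X_s)\,ds$ exactly cancels the remaining $\tfrac{1}{2}\sum_i V_i^2 f$ drift, producing the stated formula. The main obstacle here is purely bookkeeping: one must faithfully expand $V_i^2 f$ in coordinates and verify that the corrections arising from the two Stratonovich-to-Itô conversions (on the SDE and on the noise integrals) combine with the Itô second-order term to cancel. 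Once the identity for $V_i(V_i f)$ displayed above is in hand, everything else is mechanical.
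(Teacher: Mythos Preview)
Your argument is correct and is exactly the standard derivation of the Stratonovich chain rule from the classical It\^o formula: convert the SDE to It\^o form, apply the second-order It\^o formula, use the coordinate identity $V_i^2 f=\sum_{k,l}v_i^k v_i^l\,\partial_k\partial_l f+\sum_k(V_i v_i^k)\,\partial_k f$ to recognize the drift as $Lf$, and then undo the Stratonovich correction on the stochastic integrals. All the bookkeeping you outline checks out.

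There is nothing to compare against, however: the paper states this proposition without proof, simply invoking it as ``nothing else but the celebrated It\^o's formula.'' Your write-up supplies the details the paper omits, and does so by the route one would expect.
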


\section{Stochastic Taylor expansions}

Our goal is to study solutions of stochastic differential equations in small times. A powerful tool to do so is the stochastic Taylor expansion whose scheme is described in the first subsection below. After subtle algebraic manipulations of the stochastic Taylor expansion involving the so-called formal Chen series, it is possible to deduce an approximation in small times of the flow $(x \rightarrow X_t^x )_{t \ge 0}$ associated to the given stochastic differential equation.

\subsection{Motivation}
Let $f: \mathbb{R}^n \rightarrow \mathbb{R}$ be a $C^{\infty}$ bounded function
and denote by $(X_t^{x_0})_{t \geq 0}$ the solution of (\ref{EDS2}) with initial condition $x \in \mathbb{R}^n$.
First, by It\^o's formula, we have
\begin{equation*}
f(X_t^{x}) =f(x) + \sum_{i=0}^d \int_0^t (V_i f) (X_s^{x})
\circ dB^i_s, \text{ } t \geq 0,
\end{equation*}
where we use the notation $B^0_t=t$.
Now, a new application of It\^o's formula to $V_i f (X_s^x)$ leads
to
\begin{equation*}
f(X_t^{x}) =f(x) + \sum_{i=1}^d  (V_i f)
(x)B^i_t+\sum_{i,j=1}^d \int_0^t \int_0^s (V_j V_i f)
(X_u^{x}) \circ dB^j_u \circ dB^i_s.
\end{equation*}
We may iterate this process. For this, let us introduce the following notations:
\begin{enumerate}
\item
\[
\Delta^k [0,t]=\{ (t_1,...,t_k) \in [0,t]^k, t_1 \leq ... \leq t_k
\};
\]
\item If $I=(i_1,...i_k) \in \{0,...,d\}^k$ is a word with length
$k$,
\begin{equation*}
 \int_{\Delta^k [0,t]} \circ dB^I= \int_{0 \leq t_1 \leq ... \leq t_k \leq t}
\circ dB^{i_1}_{t_1} \circ ... \circ dB^{i_k}_{t_k},
\end{equation*}
and $n(I)$ is the number of $0$'s in $I$.
\end{enumerate}

We can then continue the above procedure and get that for every $N \ge 1 $ 
\begin{equation*}
f(X_t^{x}) =f(x) + \sum_{k=1}^N \sum_{I \in \{0,...,d\}^k, k+n(I) \le N}
(V_{i_1} ... V_{i_k} f) (x) \int_{\Delta^k [0,t]} \circ dB^I
+\mathbf{R}_N (t,f,x),
\end{equation*}
for some remainder term $\mathbf{R}_N(t,f,x)$ which is easily computed, and shown to satisfy 
\[
\sup_{x \in \mathbb{R}^n} \sqrt{\mathbb{E} \left(\mathbf{R}_N(t,f,x)^2 \right) }\le C_N t^{\frac{N+1}{2}} \sup_{(i_1,...,i_k), k+n(I)=N+1 \text{ or } N+2} \| V_{i_1} \cdots V_{i_k} f \|_{\infty}.
\]

This shows that, in \textit{small times}, the sum
\begin{equation}\label{Taylorexpansion}
f(x) + \sum_{k=1}^N \sum_{I \in \{0,...,d\}^k, k+n(I) \le N}
(V_{i_1} ... V_{i_k} f) (x) \int_{\Delta^k [0,t]} \circ dB^I
\end{equation}
is a more and more accurate approximation of $f(X_t^{x})$ when $N \to +\infty$.

\begin{remark}
For further details on the above discussion, we refer to Ben Arous \cite{Ben2} and Kloeden-Platen \cite{Kloeden}. Related discussions for the Taylor expansion of solutions of equations driven by fractional Brownian motions may be found in  Baudoin-Coutin \cite{Bau-Cou}. For the case of rough paths we refer to Inahama \cite{Inahama} and Friz-Victoir \cite{Fri-Vi1}.
\end{remark}

\subsection{Chen series}

Our goal is now transform the approximation given by the Taylor expansion (\ref{Taylorexpansion}) into an approximation of the stochastic flow associated to the equation (\ref{EDS2}). That is, at any order, we wish to construct an explicit random diffeomorphism $\Phi_t^N$ such that
\[
(\Phi_t^N f)(x):=f\left(\Phi_t^N (x) \right)=f(x) + \sum_{k=1}^N \sum_{I \in \{0,...,d\}^k, k+n(I) \le N}
(V_{i_1} ... V_{i_k} f) (x) \int_{\Delta^k [0,t]} \circ dB^I
+\mathbf{R}_N^* (t,f,x),
\]
for some remainder term $\mathbf{R}_N^* (t,f,x)$. 

In order to do so, the main tool was introduced by K.T. Chen in \cite{Che} in his seminal paper of 1957. Chen considered formal Taylor series associated to paths (or currents) and proved that such series could be represented as the exponential of Lie series.

We present in this subsection those results.

Let $\mathbb{R} [[X_0,...,X_d]]$ be the non commutative algebra
over $\mathbb{R}$ of the formal series with $d+1$ indeterminates,
that is the set of series
\[
Y=y_0+\sum_{k = 1}^{+\infty} \sum_{I \in \{0,1,...,d\}^k}
a_{i_1,...,i_k} X_{i_1}...X_{i_k}.
\]
\begin{definition}
If $x: \mathbb{R}_{\ge 0} \rightarrow \mathbb{R}^d$ is an
absolutely continuous path, the  Chen series of $x$ is the formal
series:
\[
\mathfrak{S} (x)_t =1 + \sum_{k=1}^{+\infty} \sum_{I \in
\{0,1,...,d\}^k} \left( \int_{0 \leq t_1 \leq ... \leq t_k \leq t}
dx^{i_1}_{t_1}  \cdots  dx^{i_k}_{t_k} \right) X_{i_1} \cdots
X_{i_k}, \quad t \ge 0,
\]
with the convention $x^0_t=t$.
\end{definition}

The exponential of $Y \in \mathbb{R} [[ X_0
,..., X_d ]]$ is defined by
\[
\exp (Y)=\sum_{k=0}^{+\infty} \frac{Y^k}{k!},
\]
and the logarithm of $Y$ by
\[
\ln (Y)=\sum_{k=1}^{+\infty} \frac{(-1)^k}{k} (Y-1)^k.
\]

The Chen-Strichartz formula that we will prove in this subsection, is an explicit formula for $\ln \mathfrak{S}
(x)_t$.

\begin{remark}
As a preliminary,  let us first try to understand a simple case: the
commutative case.

We denote $\mathcal{S}_k$ the group of the permutations of the
index set $\{1,...,k\}$ and if $\sigma \in \mathcal{S}_k$, we
denote for a word $I=(i_1,...,i_k)$,  $\sigma \cdot I$ the word
$(i_{\sigma(1)},...,i_{\sigma(k)})$. If
$X_0,X_1,...,X_d$ were commuting\footnote{Rigorously, this means that we work in $\mathbb{R}[[X_0,X_1,\cdots,X_d]]/\mathcal{J}$ where $\mathcal{J}$ is the two-sided ideal generated by the relations $X_iX_j-X_jX_i=0$}, we would have
\begin{eqnarray*}
\mathfrak{S} (x)_t = \mathbf{1} + \sum_{k=1}^{+\infty} \sum_{I=(i_1,...,i_k)}
X_{i_1} ... X_{i_k} \left( \frac{1}{k!} \sum_{\sigma \in
\mathcal{S}_k} \int_{\Delta^k [0,t]}  dx^{\sigma \cdot I}
\right).
\end{eqnarray*}
Since
\[
\sum_{\sigma \in \mathcal{S}_k} \int_{\Delta^k [0,t]} 
dx^{\sigma \cdot I} =x^{i_1}_t ... x^{i_k}_t,
\]
we get,
\begin{align*}
\mathfrak{S} (x)_t = \mathbf{1} + \sum_{k=1}^{+\infty} \frac{1}{k!}
\sum_{I=(i_1,...,i_k)} X_{i_1} ... X_{i_k} x^{i_1}_t ... x^{i_k}_t
=\exp \left( \sum_{i=0}^d X_i x^i_t \right).
\end{align*}
\end{remark}
We define the Lie bracket between two elements $U$ and $V$ of
$\mathbb{R} [[ X_0 ,..., X_d ]]$  by
\[
[U,V]=UV-VU.
\]
Moreover, if $I=(i_1,...,i_k) \in \{ 0,..., d \}^k$ is a word, we
denote by $X_I$ the commutator defined by
\[
X_I = [X_{i_1},[X_{i_2},...,[X_{i_{k-1}}, X_{i_{k}}]...].
\]

The universal Chen's theorem  asserts that the Chen series of a
path is the exponential of a Lie series.

\begin{theorem}\label{Chen-Strichartz}[Chen-Strichartz expansion theorem]
If $x: \mathbb{R}_{\ge 0} \rightarrow \mathbb{R}^d$ is an
absolutely continuous path, then
\[
\mathfrak{S} (x)_t =\exp \left( \sum_{k \geq 1} \sum_{I \in
\{0,1,...,d\}^k}\Lambda_I (x)_t X_I \right), \text{ }t \geq 0,
\]
where for $k \ge 1, \text{ }I \in \{0,1,...,d\}^k$ :
\begin{itemize}
\item $\mathcal{S}_k$ is the set of the permutations of
$\{0,...,k\}$;
\item If $\sigma \in \mathcal{S}_k$, $e(\sigma)$ is the cardinality of the set
\[
\{ j \in \{0,...,k-1 \} , \sigma (j) > \sigma(j+1) \},
\]
\item
\[
\Lambda_I (x)_t= \sum_{\sigma \in \mathcal{S}_k} \frac{\left(
-1\right) ^{e(\sigma )}}{k^{2}\left(
\begin{array}{l}
k-1 \\
e(\sigma )
\end{array}
\right) } \int_{0 \leq t_1 \leq ... \leq t_k \leq t}
dx^{\sigma^{-1}(i_1)}_{t_1}  \cdots
dx^{\sigma^{-1}(i_k)}_{t_k},\quad t \ge 0.
\]
\end{itemize}
\end{theorem}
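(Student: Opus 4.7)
The plan is to derive an ODE satisfied by $\mathfrak{S}(x)_t$, show that its logarithm lies in the free Lie algebra generated by the $X_i$'s, and then identify its coefficients by a combinatorial argument.

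First, by differentiating each iterated integral with respect to its upper endpoint $t$, one checks that $\mathfrak{S}(x)_t$ satisfies the linear right-multiplicative ODE
\[
\frac{d}{dt}\mathfrak{S}(x)_t = \mathfrak{S}(x)_t \cdot \Omega_t, \qquad \mathfrak{S}(x)_0 = 1, \qquad \Omega_t := \sum_{i=0}^d X_i \,\dot{x}^i_t,
\]
with the convention $x^0_t = t$. This is straightforward: $\frac{d}{dt}\int_{\Delta^k[0,t]} dx^{i_1}\cdots dx^{i_k}$ equals $\dot{x}^{i_k}_t$ times the order-$(k-1)$ iterated integral over $\Delta^{k-1}[0,t]$, and the factor $X_{i_k}$ lands on the right.

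Next, I would show that $Z_t := \ln\mathfrak{S}(x)_t$ lies in the completion of the free Lie algebra generated by $X_0,\ldots,X_d$. The cleanest way is to invoke the shuffle product identity for iterated integrals of absolutely continuous paths, which says that $\mathfrak{S}(x)_t$ is \emph{grouplike} with respect to the shuffle coproduct on the tensor algebra; by Friedrichs' criterion, its logarithm is then a primitive element, hence a Lie series. Writing $Z_t = \sum_{k,I}\mu_I(x)_t\, X_I$, the ODE together with the derivative-of-exponential formula yields
\[
\Omega_t = \frac{\operatorname{ad} Z_t}{\mathbf{1}-e^{-\operatorname{ad} Z_t}}\,\dot Z_t, \qquad \dot Z_t = \sum_{n\geq 0}\frac{(-1)^n}{(n+1)!}(\operatorname{ad} Z_t)^n \Omega_t,
\]
a Magnus-type equation that can be solved by iteration. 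Iterating produces a sum, over all iterated integrals of the $\dot{x}^i$, of right-nested commutators weighted by rationals coming from the Bernoulli generating function $z/(1-e^{-z})$.

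The main obstacle is the final combinatorial bookkeeping: once all contributions of length $k$ are regrouped and the commutators are written in right-nested form $X_I$, one must match the resulting coefficient with $\sum_\sigma (-1)^{e(\sigma)}/(k^2\binom{k-1}{e(\sigma)})$. This is the heart of Strichartz's original argument and rests on an Eulerian-number identity relating descents of permutations in $\mathcal{S}_k$ to the coefficients produced by iterating the Magnus expansion. An alternative I would seriously consider, to avoid that identity, is to first prove the formula on piecewise-constant-velocity paths, where $\mathfrak{S}(x)_t$ reduces to a finite ordered product $\exp(\Omega_1\Delta t_1)\cdots\exp(\Omega_m\Delta t_m)$ and Strichartz's expression is equivalent to a symbolic form of the generalized Baker--Campbell--Hausdorff formula, then pass to general absolutely continuous paths by continuity in the projective-limit topology on the truncations of $\mathbb{R}[[X_0,\ldots,X_d]]$.
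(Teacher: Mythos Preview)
Your proposal is correct, and in fact the alternative you flag at the end is precisely the route the paper takes. The paper first establishes Chen's multiplicativity relations for the signature, uses them to write $\mathfrak{S}(x)_T$ for a piecewise affine path as an ordered product $\prod_n \exp\bigl((t_{n+1}-t_n)\sum_i a_n^i X_i\bigr)$, and then invokes the Baker--Campbell--Hausdorff--Dynkin formula (quoted as a black box) to obtain a Lie series whose coefficients are checked to equal Strichartz's $\Lambda_I$; the general absolutely continuous case follows by a limiting argument. Your primary approach via the Magnus expansion and the derivative-of-exponential formula is genuinely different and closer in spirit to Strichartz's original computation: it is more self-contained, needing no external BCH--Dynkin statement, but it forces you to carry out the Eulerian-descent identity yourself, which is where the real combinatorial effort lies. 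The paper's approach trades that effort for a reference, making the argument shorter and more modular but less transparent about where the coefficients $(-1)^{e(\sigma)}/\bigl(k^2\binom{k-1}{e(\sigma)}\bigr)$ actually come from.
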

\begin{remark}
The first terms in the Chen-Strichartz formula are:
\begin{enumerate}
\item
\[
\sum_{I=(i_1)} \Lambda_I (x)_t X_I=\sum_{k=0}^d x^i_t X_i;
\]
\item
\[
\sum_{I=(i_1,i_2)} \Lambda_I (x)_t X_I=\frac{1}{2} \sum_{0 \leq
i<j \leq d}  [X_i , X_j] \int_0^t x^i_s  dx^j_s -  x^j_s dx^i_s.
\]
\end{enumerate}
\end{remark}
We shall give the proof of this theorem in the case where the path $x_t$ is piecewise affine that is
\[
dx_t= a_i dt
\]
on the interval $[t_i,t_{i+1})$ where $0=t_0\le t_1 \le \cdots \le  t_N =T$. Since any absolutely continuous path is limit of piecewise affine paths, we may then conclude by a limiting argument. The proof relies on several lemmas.
\begin{lemma}[Chen's relations]
\label{Chen's relations} Let $x_t$ be an absolutely continuous path.  For any word $(i_1,...,i_n) \in \{0, 1, ...
, d \}^n$ and any $0<s<t$,
\begin{eqnarray*}
\int_{\Delta^n [0,t]}  dx^{(i_1,...,i_n)}=\sum_{k=0}^{n}
\int_{\Delta^k [0,s]}  dx^{(i_1,...,i_k)}\int_{\Delta^{n-k}
[s,t]}  dx^{(i_{k+1},...,i_n)},
\end{eqnarray*}
where we used the following notations:
\begin{enumerate}
\item
\[
\int_{\Delta^k [s,t]}  dx^{(i_1,...,i_k)}= \int_{s \leq t_1
\leq ... \leq t_k \leq t}  dx^{i_1}_{t_1}  ... 
dx^{i_k}_{t_k};
\]
\item if $I$ is a word with length 0, then $\int_{\Delta^{0} [0,t]} \circ dx^I =1$.
\end{enumerate}
\end{lemma}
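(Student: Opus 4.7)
The plan is to prove Chen's relations by decomposing the simplex $\Delta^n[0,t]$ according to how many of the time variables fall in $[0,s]$ versus $(s,t]$, and then applying Fubini's theorem on each piece.

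More concretely, I would first fix $s \in (0,t)$ and, for each $k \in \{0,1,\ldots,n\}$, introduce the set
\[
A_k = \{(t_1,\ldots,t_n) \in \Delta^n[0,t] : t_k \le s < t_{k+1}\},
\]
with the conventions $t_0 = 0$ and $t_{n+1} = t$ (so $A_0$ is the region where $s < t_1$ and $A_n$ is the region where $t_n \le s$). Because the $t_i$ are ordered, exactly one value of $k$ satisfies the condition $t_k \le s < t_{k+1}$ outside of the measure-zero set $\{t_i = s \text{ for some } i\}$. Hence the $A_k$ partition $\Delta^n[0,t]$ up to a null set.

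Next I would observe that on $A_k$ the constraints on the first $k$ variables and the last $n-k$ variables decouple: the condition reads $(t_1,\ldots,t_k) \in \Delta^k[0,s]$ and $(t_{k+1},\ldots,t_n) \in \Delta^{n-k}[s,t]$. Since $x$ is absolutely continuous, writing $dx_t = \dot{x}_t\,dt$ one checks that the iterated integral over $A_k$ is an ordinary Lebesgue integral whose integrand factors as a product of a function of $(t_1,\ldots,t_k)$ and a function of $(t_{k+1},\ldots,t_n)$. Fubini's theorem then gives
\[
\int_{A_k} dx^{i_1}_{t_1}\cdots dx^{i_n}_{t_n}
= \int_{\Delta^k[0,s]} dx^{(i_1,\ldots,i_k)} \int_{\Delta^{n-k}[s,t]} dx^{(i_{k+1},\ldots,i_n)}.
\]
Summing over $k = 0, 1, \ldots, n$ and using the fact that the $A_k$'s partition $\Delta^n[0,t]$ yields the stated identity, with the two boundary cases $k=0$ and $k=n$ handled by the convention that the iterated integral over a word of length $0$ equals $1$.

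There is no real obstacle here: the statement is essentially a bookkeeping identity, and the only subtle point is being careful that the decomposition exhausts the simplex and that the integrand genuinely factors on each slab $A_k$. A limiting argument is not even needed at this stage since the path $x$ is assumed absolutely continuous, so Fubini applies directly; when this lemma is later used with Brownian paths one simply interprets all integrals in the Stratonovich sense and the same decomposition persists by the same reasoning applied pathwise after the Stratonovich-to-Lebesgue conversion.
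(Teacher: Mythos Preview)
Your proof is correct. The paper, however, takes a different route: it proceeds by induction on $n$, using the recursive identity
\[
\int_{\Delta^n [0,t]}  dx^{(i_1,\ldots,i_n)}=\int_0^t \left(
\int_{\Delta^{n-1} [0,t_n]}  dx^{(i_1,\ldots,i_{n-1})} \right)
 dx^{i_n}_{t_n},
\]
splitting the outer integral at $s$ and applying the induction hypothesis to the inner $(n-1)$-fold integral. Your argument instead partitions the simplex $\Delta^n[0,t]$ directly into the slabs $A_k$ and invokes Fubini on each product region; this is more geometric and avoids induction altogether. The inductive argument has the advantage that it only manipulates iterated integrals through their recursive definition, so it transfers verbatim to settings (semimartingales, rough paths) where the iterated integral is not a Lebesgue integral over the simplex and Fubini is not available. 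On that note, your closing remark about handling Brownian paths ``pathwise after the Stratonovich-to-Lebesgue conversion'' is not quite right: Stratonovich integrals are not pathwise Lebesgue integrals, so the extension to the stochastic case genuinely requires either the inductive argument or a limiting procedure, not a direct Fubini. This does not affect the lemma as stated, which is only for absolutely continuous paths.
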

\begin{proof}
It follows readily by induction on $n$ by noticing that
\[
\int_{\Delta^n [0,t]}  dx^{(i_1,...,i_n)}=\int_0^t \left(
\int_{\Delta^{n-1} [0,t_n]}  dx^{(i_1,...,i_{n-1})} \right)
 dx^{i_n}_{t_n}.
\]
\end{proof}
The previous lemma implies the following flow property for the signature:
\begin{lemma}
 Let $x_t$ be an absolutely continuous path. For $0 < s < t$,
\[
\mathfrak{S} (x)_t =\mathfrak{S} (x)_s \left( \mathbf{1} + \sum_{k=1}^{+\infty}
\sum_{I=(i_1,...i_k)} X_{i_1} ... X_{i_k} \int_{\Delta^k [s,t]}
 dx^I \right).
\]
\end{lemma}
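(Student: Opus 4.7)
The plan is to read off the coefficient of each monomial $X_{i_1}\cdots X_{i_n}$ on both sides of the asserted identity and verify they agree via Chen's relations (the previous lemma). This is essentially a bookkeeping argument: both sides are formal series in the non-commuting indeterminates $X_0,\ldots,X_d$, so equality is tested coefficient by coefficient.

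First I would expand the right-hand side formally. Writing
\[
\mathfrak{S}(x)_s = \mathbf{1} + \sum_{j=1}^{+\infty} \sum_{J=(i_1,\ldots,i_j)} X_{i_1}\cdots X_{i_j} \int_{\Delta^j[0,s]} dx^J,
\]
and multiplying by the factor on the right, the coefficient of a monomial $X_{i_1}\cdots X_{i_n}$ (for a fixed word of length $n$) is obtained by choosing an integer $k\in\{0,1,\ldots,n\}$, assigning the prefix $(i_1,\ldots,i_k)$ to $\mathfrak{S}(x)_s$ and the suffix $(i_{k+1},\ldots,i_n)$ to the second factor. This gives exactly
\[
\sum_{k=0}^{n} \int_{\Delta^k[0,s]} dx^{(i_1,\ldots,i_k)} \int_{\Delta^{n-k}[s,t]} dx^{(i_{k+1},\ldots,i_n)},
\]
using the convention that the length-$0$ iterated integral equals $1$.

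Next I would invoke Chen's relations from Lemma \ref{Chen's relations}, which identify the displayed sum with $\int_{\Delta^n[0,t]} dx^{(i_1,\ldots,i_n)}$. But this latter quantity is by definition the coefficient of $X_{i_1}\cdots X_{i_n}$ in $\mathfrak{S}(x)_t$. Since the identification holds for every word of every length, and the constant term is $1$ on both sides (take $n=0$), the two formal series coincide.

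There is no real obstacle here, since the algebra of the product in $\mathbb{R}[[X_0,\ldots,X_d]]$ produces precisely the decomposition on which Chen's relations act; the only mild care needed is to notice that because $X_0,\ldots,X_d$ do not commute, the partition of the word $(i_1,\ldots,i_n)$ into a prefix and suffix at position $k$ is unambiguous, so no combinatorial factors appear. A convergence issue would arise only if we worked in a topological completion, but here everything is at the level of formal series, so the coefficient-wise identity is the conclusion.
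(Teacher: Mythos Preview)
Your proposal is correct and follows essentially the same approach as the paper: expand the product on the right-hand side and apply Chen's relations (the previous lemma) to recover the iterated integrals over $[0,t]$, hence $\mathfrak{S}(x)_t$. Your version is arguably cleaner in making the coefficient-by-coefficient comparison explicit and in noting why non-commutativity forces the unique prefix/suffix split, but the underlying argument is the same.
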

\begin{proof}
We have, thanks to the previous lemma,
\begin{align*}
   & \mathfrak{S} (x)_s \left( \mathbf{1} + \sum_{k=1}^{+\infty}
\sum_{I} X_{i_1} ... X_{i_k} \int_{\Delta^k [s,t]}
 dx^I \right)  \\
  = & \mathbf{1} + \sum_{k,k'=1}^{+\infty} \sum_{ I, I'} X_{i_1}... X_{i_k}X_{i'_1}...X_{i'_{k'}} \int_{\Delta^k [s,t]}
 dx^I \int_{\Delta^{k'} [0,s]}  dx^{I'}  \\
  = & \mathbf{1} + \sum_{k=1}^{+\infty}
\sum_{I} X_{i_1} ... X_{i_k} \int_{\Delta^k [0,t]}  dx^I  \\
  = & \mathfrak{S}(x)_t.
\end{align*}
\end{proof}

With this in hands, we may now come back to the proof of the Chen-Strichartz expansion theorem in the case where $x_t$ is piecewise affine.
By using inductively the previous proposition, we obtain
\[
\mathfrak{S} (x)_T=\prod_{n=0}^{N-1} \left( \mathbf{1} + \sum_{k=1}^{+\infty}
\sum_{I=(i_1,...i_k)} X_{i_1} ... X_{i_k} \int_{\Delta^k [t_n,t_{n+1}]}
 dx^I \right)
\]
Since, on $[t_n,t_{n+1})$,
\[
dx_t=a_n dt,
\]
we have
\[
\int_{\Delta^k [t_n,t_{n+1}]}  dx^I =a_n^{i_1} \cdots a_n^{i_k} \int_{\Delta^k [t_n,t_{n+1}]}  dt_{i_1} \cdots dt_{i_k} =a_n^{i_1} \cdots a_n^{i_k} \frac{(t_{n+1}-t_n)^k}{k!}.
\]
Therefore
\begin{align*}
\mathfrak{S} (x)_T&=\prod_{n=0}^{N-1} \left( \mathbf{1} + \sum_{k=1}^{+\infty}
\sum_{I=(i_1,...i_k)} X_{i_1} ... X_{i_k} a_n^{i_1} \cdots a_n^{i_k} \frac{(t_{n+1}-t_n)^k}{k!} \right) \\
 &=\prod_{n=0}^{N-1} \exp \left( (t_{n+1}-t_n) \sum_{i=0}^d a_n^i X_i \right)
\end{align*}

We now use the Baker-Campbell-Hausdorff-Dynkin formula (see Dynkin  \cite{Dynkin} and Strichartz \cite{Stri}):

\begin{proposition}[Baker-Campbell-Hausdorff-Dynkin formula]
If $y_1,\cdots,y_N \in \mathbb{R}^{d+1}$ then,
\[
\prod_{n=1}^{N}\exp \left( \sum_{i=0}^d y_n^i X_i \right) 
=\exp \left( \sum_{k \geq 1} \sum_{I \in
\{0,1,...,d\}^k}\beta_I (y_1,\cdots,y_N) X_I \right),
\]
where for $k \ge 1, \text{ }I \in \{0,1,...,d\}^k$ :
\begin{align*}
  \beta_I  (y_1,\cdots,y_N) 
 =\sum_{\sigma \in \mathcal{S}_k} \sum_{0=j_0 \le j_1 \le \cdots \le j_{N-1} \le k} \frac{\left(
-1\right) ^{e(\sigma )}}{j_1!\cdots j_{N-1}! k^{2}\left(
\begin{array}{l}
k-1 \\
e(\sigma )
\end{array}
\right) } \prod_{\nu=1}^{N}  y_\nu^{\sigma^{-1}(i_{j_{\nu-1}+1})} \cdots
y_\nu^{\sigma^{-1}(i_{j_\nu})}  .
\end{align*}
\end{proposition}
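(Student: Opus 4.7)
The plan is to adapt Dynkin's original derivation of the two-factor Baker--Campbell--Hausdorff formula to $N$ factors. The key preliminary fact is that $\log\prod_{n=1}^N\exp(Y_n)$, where $Y_n=\sum_{i=0}^d y_n^i X_i$, lies in the topological closure of the free Lie algebra generated by $Y_1,\dots,Y_N$; this follows by induction from the classical two-factor BCH theorem, and must be established independently of Chen--Strichartz since the present proposition is the ingredient used to prove that theorem.

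First I would expand each exponential and multiply out, obtaining
\[
\prod_{n=1}^N\exp(Y_n)-1=\sum_{k\ge 1}\sum_{\substack{j_1,\dots,j_N\ge 0\\ j_1+\cdots+j_N=k}}\frac{Y_1^{j_1}\cdots Y_N^{j_N}}{j_1!\cdots j_N!}.
\]
Applying $\log(1+Z)=\sum_{m\ge 1}\tfrac{(-1)^{m-1}}{m}Z^m$ and collecting terms of total degree $k$ in the $X_i$'s yields an explicit, if unwieldy, associative polynomial $u_k\in\mathbb{R}\langle X_0,\dots,X_d\rangle$.

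Next I would invoke the Dynkin--Specht--Wever theorem: any element $u_k$ of the free Lie algebra that is homogeneous of degree $k$ satisfies $u_k=\tfrac1k\pi(u_k)$, where $\pi$ is the right-nested bracketing operator $\pi(X_{i_1}\cdots X_{i_k})=X_I=[X_{i_1},[X_{i_2},\dots,[X_{i_{k-1}},X_{i_k}]\dots]]$. By the preliminary fact $u_k$ is Lie, so applying $\pi/k$ reproduces $u_k$ and automatically rewrites it as a combination of the $X_I$. It then remains to reorganise the combinatorial sum: the exponent tuple $(j_1,\dots,j_N)$ gets recorded as partial sums $0=j_0\le j_1\le\cdots\le j_{N-1}\le k$ (each block selecting which factor $Y_\nu$ contributes to which position), the product $\prod y_\nu^{\sigma^{-1}(\cdot)}$ records the actual indices once the monomial is written as $X_{i_1}\cdots X_{i_k}$, and the permutation $\sigma\in\mathcal{S}_k$ encodes the relative order of letters coming from different blocks.

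The main obstacle is the final algebraic identity: for each fixed word $I$ and each fixed $\sigma\in\mathcal{S}_k$, the alternating sum over $m$ from the logarithm series must collapse, once the multinomial denominators are combined, into the Eulerian--binomial weight $(-1)^{e(\sigma)}/(k^2\binom{k-1}{e(\sigma)})$. This is the non-routine heart of Dynkin's argument; it can be verified by induction on the descent statistic $e(\sigma)$, or more conceptually via a Solomon descent-algebra identity in the group algebra of $\mathcal{S}_k$. I would carry out this step following the calculations of Strichartz \cite{Stri} and Dynkin \cite{Dynkin}, after which the remainder of the proof is assembly and bookkeeping.
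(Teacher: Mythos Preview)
The paper does not actually prove this proposition: it is stated as a known result, with the proof outsourced to the references Dynkin \cite{Dynkin} and Strichartz \cite{Stri}. Your proposal is therefore not competing with any argument in the paper itself; rather, you are sketching precisely the classical proof that those references contain. The outline you give --- expand the product of exponentials, apply the logarithm series, invoke Dynkin--Specht--Wever to rewrite the degree-$k$ part in terms of nested brackets $X_I$, and then perform the descent-algebra combinatorics to obtain the weight $(-1)^{e(\sigma)}/\bigl(k^2\binom{k-1}{e(\sigma)}\bigr)$ --- is exactly Strichartz's argument, and your identification of the final combinatorial identity as the non-routine step is accurate. In short: your approach is correct and is the standard one, while the paper simply quotes the result.
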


We get therefore:
\[
\mathfrak{S} (x)_T=\exp \left( \sum_{k \geq 1} \sum_{I \in
\{0,1,...,d\}^k}\beta_I (t_1 a_0,\cdots,(t_N-t_{N-1})a_{N-1}) X_I \right).
\]
It is finally an easy exercise to check, by using the Chen relations, that:
\[
\beta_I (t_1 a_0,\cdots,(t_N-t_{N-1})a_{N-1})= \sum_{\sigma \in \mathcal{S}_k} \frac{\left(
-1\right) ^{e(\sigma )}}{k^{2}\left(
\begin{array}{l}
k-1 \\
e(\sigma )
\end{array}
\right) } \int_{0 \leq t_1 \leq ... \leq t_k \leq t}
dx^{\sigma^{-1}(i_1)}_{t_1}  \cdots
dx^{\sigma^{-1}(i_k)}_{t_k},\quad t \ge 0.
\]
\begin{remark}
The seminal result of Chen \cite{Che} asserted that $\ln \mathfrak{S} (x)_T$ was a Lie series. The coefficients of this expansion were computed by  Strichartz  \cite{Stri}. 
\end{remark}
\subsection{Brownian Chen series}

Chen's theorem can actually be extended to Brownian paths (see  Baudoin \cite{Bau}, Ben Arous \cite{Ben2}, Castell \cite{Cast}, Fliess \cite{Fli}) and even to rough paths (see Lyons \cite{Ly}, Friz-Victoir \cite{Fri-Vi2}).

\begin{definition}
If $(B_t)_{t \ge 0}$ is a $d$-dimensional Brownian motion, the
Chen series of $B$ is the formal series:
\[
\mathfrak{S} (B)_t =1 + \sum_{k=1}^{+\infty} \sum_{I \in
\{0,1,...,d\}^k} \left( \int_{0 \leq t_1 \leq ... \leq t_k \leq t}
\circ dB^{i_1}_{t_1}  \cdots \circ dB^{i_k}_{t_k} \right) X_{i_1}
\cdots X_{i_k}, \quad t \ge 0,
\]
with the convention $B^0_t=t$, and $\circ$ denotes Stratonovitch
integral.
\end{definition}

\begin{theorem}\label{Chen-Strichartz brownien}
If $(B_t)_{t \ge 0}$ is a $d$-dimensional Brownian motion, then
\[
\mathfrak{S} (B)_t =\exp \left( \sum_{k \geq 1} \sum_{I \in
\{0,1,...,d\}^k}\Lambda_I (B)_t X_I \right), \text{ }t \geq 0,
\]
where for $k \ge 1, \text{ }I \in \{0,1,...,d\}^k$,
\[
\Lambda_I (B)_t= \sum_{\sigma \in \mathcal{S}_k} \frac{\left(
-1\right) ^{e(\sigma )}}{k^{2}\left(
\begin{array}{l}
k-1 \\
e(\sigma )
\end{array}
\right) } \int_{0 \leq t_1 \leq ... \leq t_k \leq t} \circ
dB^{\sigma^{-1}(i_1)}_{t_1}  \cdots  \circ
dB^{\sigma^{-1}(i_k)}_{t_k},\quad t \ge 0.
\]
\end{theorem}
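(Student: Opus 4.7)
The plan is to reduce the Brownian statement to the deterministic Chen-Strichartz theorem already established for piecewise affine paths, via a Wong-Zakai type approximation. The combinatorics on the right-hand side is already in place; what changes is only the interpretation of the iterated integrals, which must pass from Riemann-Stieltjes to Stratonovich.

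First, fix $T>0$ and for each $n\ge 1$ let $B^n$ denote the dyadic piecewise linear interpolation of $B$ on $[0,T]$. Each $B^n$ is absolutely continuous, so by the deterministic Chen-Strichartz theorem proved above,
\[
\mathfrak{S}(B^n)_T = \exp\left(\sum_{k \ge 1} \sum_{I \in \{0,\dots,d\}^k} \Lambda_I(B^n)_T\, X_I \right),
\]
as a formal series identity in $\mathbb{R}[[X_0,\dots,X_d]]$, with every integral interpreted in the Riemann-Stieltjes sense (the time coordinate $B^0_t=t$ is smooth and presents no issue).

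Next, I would truncate both sides at word length $N$ and compare coefficients. The coefficient of $X_{i_1}\cdots X_{i_k}$ on the left is $\int_{\Delta^k[0,T]} dB^{n,I}$, while the coefficient on the right is a fixed universal polynomial in the finitely many $\Lambda_J(B^n)_T$ with $|J|\le N$. I would then invoke the classical Wong-Zakai convergence of iterated integrals: for every word $I$,
\[
\int_{\Delta^k[0,T]} dB^{n,I} \;\xrightarrow[n\to\infty]{}\; \int_{\Delta^k[0,T]} \circ\, dB^I
\]
in probability, hence along a subsequence almost surely and simultaneously for all words of length $\le N$. Each $\Lambda_I(B^n)_T$ then converges to its Stratonovich analogue $\Lambda_I(B)_T$, and by continuity of the polynomial that expresses the right-hand coefficient, the truncated identity passes to the limit, giving the theorem at level $N$. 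Since $N$ is arbitrary, the full formal identity follows.

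The main obstacle is precisely the Wong-Zakai convergence of iterated Stratonovich integrals of arbitrary order; this is the only non-algebraic input. It is a classical result (Ikeda-Watanabe, Stroock-Varadhan, and in the rough-path framework Friz-Victoir, already cited in the paper) that I would invoke as a black box. Everything else is algebraic bookkeeping on the deterministic identity, made rigorous by the finite truncation.
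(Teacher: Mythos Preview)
The paper does not actually supply a proof of this theorem: it simply states the result and refers to the literature (Baudoin, Ben Arous, Castell, Fliess for the Brownian case; Lyons and Friz--Victoir for rough paths). Your Wong--Zakai approach is correct and is the natural continuation of the paper's own strategy for the deterministic Theorem~\ref{Chen-Strichartz}, where the general absolutely continuous case was obtained by a limiting argument from piecewise affine paths. Here you are doing the same thing one level up: approximate $B$ by its dyadic piecewise linear interpolants $B^n$, apply the already-proved deterministic identity to each $B^n$, and pass to the limit coefficient by coefficient using the convergence of iterated Riemann--Stieltjes integrals of $B^n$ to the corresponding Stratonovich iterated integrals of $B$. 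The truncation-at-level-$N$ device makes this rigorous, since only finitely many coefficients and a fixed polynomial combination of them are involved at each stage. The only genuine analytic input is the Wong--Zakai convergence of iterated integrals of arbitrary order, which you correctly isolate and cite; this is exactly the content that the references above (and in particular the rough-path literature) provide.
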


If
\[
Y=y_0+\sum_{k = 1}^{+\infty} \sum_{I \in \{0,1,...,d\}^k}
a_{i_1,...,i_k} X_{i_1}...X_{i_k}.
\]
is a  random series, that is if the coefficients are real random
variables defined on a probability space, we will denote
\[
\mathbb{E}(Y)=\mathbb{E}(y_0)+\sum_{k = 1}^{+\infty} \sum_{I \in
\{0,1,...,d\}^k} \mathbb{E}(a_{i_1,...,i_k}) X_{i_1}...X_{i_k}.
\]
as soon as the coefficients of $Y$ are integrable, where
$\mathbb{E}$ stands for the expectation.

The following theorem gives the expectation (see Baudoin \cite{Bau}, Lyons-Victoir \cite{Ly-Vi})
  of the Brownian Chen series:
\begin{theorem}\label{expectationB}
For $t \ge 0$,
\[
\mathbb{E} \left( \mathfrak{S} (B)_t \right)=\exp \left( t
\left(X_0+\frac{1}{2}\sum_{i=1}^d X_i^2 \right)\right).
\]
\end{theorem}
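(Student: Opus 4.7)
The plan is to identify a formal linear ODE satisfied by both $U_t := \mathbb{E}(\mathfrak{S}(B)_t)$ and $V_t := \exp(tL)$, where $L = X_0 + \frac{1}{2}\sum_{i=1}^d X_i^2$ lies in $\mathbb{R}[[X_0,\ldots,X_d]]$, and to deduce equality by uniqueness. Both $U_t$ and $V_t$ are formal series whose coefficients depend smoothly on $t$; the goal is to verify that each solves $\frac{dY_t}{dt} = Y_t \cdot L$ with $Y_0 = \mathbf{1}$.

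First I would derive a formal Stratonovich SDE for $\mathfrak{S}(B)_t$ itself. By construction of the iterated integrals, the coefficient of $X_{i_1}\cdots X_{i_k}$ satisfies $d\bigl(\int_{\Delta^k[0,t]} \circ dB^I\bigr) = \bigl(\int_{\Delta^{k-1}[0,t]} \circ dB^{(i_1,\ldots,i_{k-1})}\bigr) \circ dB^{i_k}_t$, so summing over all words gives, formally,
$$d\mathfrak{S}(B)_t = \sum_{i=0}^d \mathfrak{S}(B)_t\, X_i \circ dB^i_t, \qquad \mathfrak{S}(B)_0 = \mathbf{1},$$
with the convention $dB^0_t = dt$. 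Next, I would convert this Stratonovich equation into Itô form. Since the $X_i$ are constant formal indeterminates and $d\langle B^i, B^j\rangle_t = \delta_{ij}\, dt$ for $i,j \geq 1$, a direct computation of $d\langle \mathfrak{S}(B) X_i, B^i\rangle_t = \mathfrak{S}(B)_t X_i^2\, dt$ yields
$$d\mathfrak{S}(B)_t = \mathfrak{S}(B)_t \cdot L\, dt + \sum_{i=1}^d \mathfrak{S}(B)_t\, X_i\, dB^i_t.$$

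Then I would take expectations monomial by monomial. Each coefficient of $\mathfrak{S}(B)_t$ is a finite iterated Brownian integral, hence lies in every $L^p$, and the Itô-martingale contributions vanish in mean at the level of each monomial. One therefore obtains
$$\mathbb{E}(\mathfrak{S}(B)_t) = \mathbf{1} + \int_0^t \mathbb{E}(\mathfrak{S}(B)_s) \cdot L\, ds,$$
so $U_t' = U_t L$ with $U_0 = \mathbf{1}$. Since $V_t = \exp(tL)$ trivially satisfies the same ODE, and since at each fixed word length the identity reduces to a finite triangular linear system with a unique solution, we conclude $U_t = V_t$.

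The principal obstacle is rigor: $\mathfrak{S}(B)_t$ is not a convergent random series but a formal generating function for iterated integrals, so every step above must be read as an assertion about the coefficient of a fixed monomial $X_{i_1}\cdots X_{i_k}$, for which only finitely many terms matter. Once this bookkeeping is made explicit, the Itô--Stratonovich conversion and the vanishing of the martingale contributions are routine. An alternative, more hands-on route would bypass the formal ODE entirely and proceed by induction on $k$ to compute $\mathbb{E}\bigl(\int_{\Delta^k[0,t]} \circ dB^I\bigr)$, showing directly that it matches the coefficient of $X_{i_1}\cdots X_{i_k}$ in $\exp(tL) = \sum_{n\geq 0} \frac{t^n L^n}{n!}$, using the same Itô correction for each word individually.
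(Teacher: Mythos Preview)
Your argument is correct. The key steps---the formal Stratonovich SDE for $\mathfrak{S}(B)_t$, the It\^o conversion producing the drift $\mathfrak{S}(B)_t L$, the vanishing in mean of the martingale part coefficient by coefficient, and the uniqueness of the linear ODE degree by degree---all go through as you describe, once one reads everything at the level of a fixed monomial.

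The paper, however, takes exactly the ``alternative, more hands-on route'' you mention at the end: it computes $\mathbb{E}\bigl(\int_{\Delta^n[0,t]}\circ dB^I\bigr)$ directly for each word $I$, observing that this expectation is nonzero only when $I$ is a concatenation of the blocks $\{0\}$ and $\{(i,i)\}$, in which case it equals $t^{(n+n(I))/2}\big/\bigl(2^{(n-n(I))/2}((n+n(I))/2)!\bigr)$, and then recognizes the resulting sum as the expansion of $\exp(tL)$. Your ODE argument is more structural and avoids the combinatorial bookkeeping, while the paper's computation has the advantage of exhibiting the expected values of the individual iterated integrals explicitly---a fact it actually reuses later in the proof of Theorem~\ref{Parametrix}. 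Either approach is short; yours makes the analogy with the deterministic Chen series and the semigroup property more transparent, whereas the paper's makes the word-level structure visible.
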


\begin{proof}
An easy computation shows that if $\mathcal{I}_n$ is
the set of words  with length $n$ obtained by all the possible
concatenations of the words
\[
\{ 0 \}, \{ (i,i) \}, \quad i \in \{1,...,d\},
\]
\begin{enumerate}
\item If $I \notin \mathcal{I}_n$ then
\[
\mathbb{E} \left( \int_{\Delta^n [0,t]}  \circ dB^I \right) =0 ;
\]
\item If $I \in \mathcal{I}_n$ then
\[
\mathbb{E} \left( \int_{\Delta^n [0,t]}  \circ dB^I \right)
=\frac{t^{\frac{n+n(I)}{2}}}{2^{\frac{n-n(I)}{2}}\left(\frac{n+n(I)}{2}
\right) ! },
\]
where $n(I)$ is the number of 0 in $I$ (observe that since $I \in
\mathcal{I}_n$, $n$ and $n(I)$ necessarily have the same parity).
\end{enumerate}
Therefore,
\[
\mathbb{E} \left( \mathfrak{S} (B)_t \right)=1+\sum_{k = 1}^{+\infty} \sum_{I \in
\mathcal{I}_k} \frac{t^{\frac{k+n(I)}{2}}}{2^{\frac{k-n(I)}{2}}\left(\frac{k+n(I)}{2}
\right) ! } X_{i_1}...X_{i_k}
\]
\end{proof}

\subsection{Exponential of a vector field}

With these new tools in hands we may now come back to our primary purpose, which was to transform the stochastic Taylor expansion into an approximation of the stochastic flow associated to a stochastic differential equation.

In order to use the previous formalism, we first need to understand what is the exponential of a vector field.
Let $\mathcal{O} \subset \mathbb{R}^n$ be a non empty open set and $V$ be a smooth vector field on $\mathcal{O}$. It is a basic result in the theory of ordinary differential
equations that if $K \subset \mathcal{O}$ is compact, there exist
$\varepsilon >0$ and a smooth mapping
\[
\Phi : (-\varepsilon, \varepsilon) \times K \rightarrow
\mathcal{O},
\]
such that for $x \in K$ and $-\varepsilon < t < \varepsilon$,
\[
\frac{\partial{\Phi}}{\partial t} (t,x)=X (\Phi(t,x)), \text{
}\Phi(0,x)=x.
\]
Furthermore, if $y:(-\eta,\eta)\rightarrow \mathbb{R}^n$ is a
$C^1$ path such that for $-\eta < t < \eta$, $y'(t)=X(y(t))$, then
$y(t)=\Phi(t,y(0))$ for $-\min (\eta,\varepsilon) < t < \min
(\eta,\varepsilon)$. From this characterization of $\Phi$ it is
easily seen that for $x \in K$ and $t_1,t_2 \in \mathbb{R}$ such
that $\mid t_1 \mid + \mid t_2 \mid < \varepsilon$,
\[
\Phi ( t_1 , \Phi( t_2 ,x ) )=\Phi (t_1 +t_2,x).
\]
Because of this last property, the solution mapping $t \rightarrow
\Phi (t,x)$ is called the exponential mapping, and we denote
$\Phi(t,x)=e^{tV} (x)$. It always exists if $\mid t \mid $ is
sufficiently small. If $e^{tV}$ can be defined for any $t \in
\mathbb{R}$, then the vector field is said to be complete. For
instance if $\mathcal{O}= \mathbb{R}^n$ and if $V$ is
$C^{\infty}$-bounded then the vector field $V$ is complete.

\subsection{Lie bracket of vector fields}

We also need to introduce the notion of Lie bracket between two vector fields.

We have already stressed that a vector field $V$ may be seen derivation, that is a map on
$\mathcal{C}^{\infty} (\mathcal{O} , \mathbb{R} )$, linear over
$\mathbb{R}$, satisfying for $f,g \in \mathcal{C}^{\infty}
(\mathcal{O} , \mathbb{R} )$,
\[
V(fg)=(Vf)g +f (Vg).
\]
Also, conversely, any derivation on
$\mathcal{C}^{\infty} (\mathcal{O} , \mathbb{R} )$ is a vector
field. If $V'$ is another
smooth vector field on $\mathcal{O}$, then it is easily seen that
the operator $VV'-V'V$ is a derivation. It therefore defines a
smooth vector field on $\mathcal{O}$ which is called the Lie
bracket\index{Lie bracket of vector fields} of $V$ and $V'$ and
denoted $[V,V']$. A straightforward computation shows that for $x
\in \mathcal{O}$,
\[
[ V, V' ](x)=\sum_{i=1}^n \left( \sum_{j=1}^n v_j (x)
\frac{\partial v'_i}{\partial x_j}(x)- v'_j (x) \frac{\partial
v_i}{\partial x_j}(x)\right)\frac{\partial}{\partial x_i}.
\]
Observe that the Lie bracket satisfies obviously $[V,V']=-[V',V]$
and the so-called Jacobi identity\index{Jacobi identity}, that is:
\[
[V,[V',V'']]+[V',[V'',V]]+[V'',[V,V']]=0.
\]

\subsection{Castell's approximation theorem}

Combining the stochastic Taylor expansion with the Chen-Strichartz formula leads finally  to the following result of approximation of stochastic flows which is due to Castell \cite{Cast}:

\begin{theorem}[Castell approximation theorem]
Let $(B_t)_{t \ge 0}$ be a $d$-dimensional Brownian motion.
Let us assume that $V_0,V_1,\cdots, V_d$ are $C^\infty$ bounded vector fields on $\mathbb{R}^n$,
Then, for the solution $(X_t^{x_0})_{t \ge 0}$ of the following stochastic differential equation
\begin{equation} \label{EDScastell}
X_t^{x_0} =x_0 +\int_0^t V_0(X_s^{x_0}) ds +\sum_{i=1}^d \int_0^t V_i
(X_s^{x_0}) \circ dB^i_s,
\end{equation}
we have for every $N \ge 1$,
\[
X_t^{x_0}=\exp \left( \sum_{k =1}^N \sum_{I \in
\{0,1,...,d\}^k, k+n(I) \le N }\Lambda_I (B)_t X_I \right) (x_0)+t^{\frac{N+1}{2}} \mathbf{R}_N(t),
\]
where $n(I)$ denotes the number of 0's in the word $I$ and where the remainder term $\mathbf{R}_N(t)$ is bounded in probability when $t \to 0$. More precisely, $\exists \text{
}\alpha, c>0$ such that $\forall A
>c$,
\[
\lim_{t \rightarrow 0} \mathbb{P} \left( \sup_{0 \leq s \leq t}
s^{\frac{N+1}{2}} \mid \mathbf{R}_N (s) \mid \geq A
t^{\frac{N+1}{2}} \right) \leq \exp \left( - \frac{A^{\alpha}}{c}
\right).
\]
\end{theorem}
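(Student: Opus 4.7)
The plan is to combine the stochastic Taylor expansion (\ref{Taylorexpansion}) with the Chen--Strichartz formula for Brownian paths (Theorem~\ref{Chen-Strichartz brownien}). Applied to a smooth compactly supported test function $f$, the Taylor expansion already provides
\[
f(X_t^{x_0})=f(x_0)+\sum_{k=1}^{N}\sum_{\substack{I\in\{0,\ldots,d\}^k\\ k+n(I)\le N}}(V_{i_1}\cdots V_{i_k}f)(x_0)\int_{\Delta^k[0,t]}\circ dB^I+\mathbf{R}_N(t,f,x_0),
\]
with $\mathbf{R}_N$ of $L^2$-size $t^{(N+1)/2}$. The remaining task is to reorganise the polynomial of iterated integrals on the right-hand side as the value of $f$ at the point $e^{Z_t^N}(x_0)$, where $Z_t^N$ will be the truncated Chen--Strichartz Lie series in the vector fields $V_i$.

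This reorganisation is purely algebraic. Since each vector field $V_i$ acts on $C^{\infty}(\mathbb{R}^n,\mathbb{R})$ as a derivation, the assignment $X_i\mapsto V_i$ extends to a homomorphism from the free associative algebra $\mathbb{R}[[X_0,\ldots,X_d]]$ into the algebra of differential operators, sending the formal commutator $X_I$ to the iterated Lie bracket $V_I$. Applying this representation to the Chen--Strichartz identity $\mathfrak{S}(B)_t=\exp\bigl(\sum_{k,I}\Lambda_I(B)_tX_I\bigr)$, evaluating at $x_0$, and using the basic identity $\sum_{m\ge 0}V^mf(x_0)/m!=f(e^V(x_0))$ for any smooth vector field $V$, yields the formal identity
\[
\sum_{k\ge 0}\sum_{I}\Bigl(\int_{\Delta^k[0,t]}\circ dB^I\Bigr)(V_{i_1}\cdots V_{i_k}f)(x_0)=f\bigl(e^{Z_t}(x_0)\bigr),\qquad Z_t:=\sum_{k\ge 1}\sum_{I}\Lambda_I(B)_tV_I.
\]
Assigning weight $1$ to $X_i$ for $i\ge 1$ and weight $2$ to $X_0$ matches the natural Brownian scaling, under which $\int_{\Delta^k[0,t]}\circ dB^I$ has typical size $t^{(k+n(I))/2}$. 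With this grading, the Chen--Strichartz identity is homogeneous; truncating the Lie series to $Z_t^N:=\sum_{k+n(I)\le N}\Lambda_I(B)_tV_I$ and expanding $f(e^{Z_t^N}(x_0))$ in powers of $Z_t^N$, the collection of all monomials of weight $\le N$ coincides exactly with the truncated stochastic Taylor polynomial. Subtracting this identity from the stochastic Taylor expansion and specialising $f$ to (cut-offs of) the coordinate functions produces the decomposition $X_t^{x_0}=e^{Z_t^N}(x_0)+t^{(N+1)/2}\mathbf{R}_N(t)$, with $\mathbf{R}_N(t)$ gathering the stochastic Taylor remainder and the higher-weight tail of the Taylor expansion of $f\circ e^{Z_t^N}$, both composed of iterated Stratonovich integrals of words of weight at least $N+1$.

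The main obstacle is the final, quantitative step: the pathwise exponential tail bound on $\sup_{s\le t}s^{(N+1)/2}|\mathbf{R}_N(s)|$. The remainder $\mathbf{R}_N(s)$ is a polynomial with bounded smooth coefficients in iterated Stratonovich integrals of weight at least $N+1$, hence lies in a finite sum of Wiener chaoses. By Borell's hypercontractive inequality for Wiener chaos (see e.g.\ Ben~Arous~\cite{Ben2}), a variable in the chaos of order $w$ satisfies $\mathbb{P}(|Y|\ge A)\le\exp(-cA^{2/w})$. Combining these bounds with the Brownian scaling $B_{us}\stackrel{d}{=}\sqrt{u}\,B_s$, which rescales time $t$ to time $1$ in each iterated integral while producing the exact factor $t^{(k+n(I))/2}$, and passing to the running supremum by Doob's maximal inequality, gives the claimed bound $\mathbb{P}\bigl(\sup_{0\le s\le t}s^{(N+1)/2}|\mathbf{R}_N(s)|\ge At^{(N+1)/2}\bigr)\le\exp(-A^{\alpha}/c)$ for suitable constants $\alpha,c>0$. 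The algebraic matching of the two Taylor-type expansions via Chen--Strichartz is essentially free once Theorem~\ref{Chen-Strichartz brownien} is available; it is the passage from $L^2$-estimates to pathwise exponential concentration that absorbs the real work.
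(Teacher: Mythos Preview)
The paper does not actually prove Castell's theorem; it only states it, attributes it to Castell~\cite{Cast}, and motivates it with the sentence ``Combining the stochastic Taylor expansion with the Chen--Strichartz formula leads finally to the following result.'' Your outline follows exactly this suggested route, and the algebraic part---matching the truncated stochastic Taylor polynomial with the truncation of $f\bigl(e^{Z_t^N}(x_0)\bigr)$ via the Chen--Strichartz identity under the graded homomorphism $X_i\mapsto V_i$---is correct and is precisely the mechanism the paper has in mind.

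There is, however, a genuine gap in your quantitative step. You assert that the remainder $\mathbf{R}_N(s)$ ``is a polynomial with bounded smooth coefficients in iterated Stratonovich integrals of weight at least $N+1$, hence lies in a finite sum of Wiener chaoses,'' and then invoke Borell's hypercontractive inequality. This is true for the tail coming from the Taylor expansion of $f\bigl(e^{Z_t^N}(x_0)\bigr)$, which is indeed a polynomial in the $\Lambda_I(B)_t$. It is \emph{false} for the stochastic Taylor remainder $\mathbf{R}_N(t,f,x_0)$: that remainder is a finite sum of terms of the form
\[
\int_{\Delta^k[0,t]}\bigl(V_{i_1}\cdots V_{i_k}f\bigr)\bigl(X_{t_1}^{x_0}\bigr)\circ dB^{i_1}_{t_1}\cdots\circ dB^{i_k}_{t_k},
\]
whose innermost integrand depends on the full solution $X_{t_1}^{x_0}$ and is therefore not a polynomial in Brownian iterated integrals, nor an element of any finite Wiener chaos. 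Borell's inequality does not apply to it. What Castell and Ben~Arous actually use here is a separate exponential tail estimate for iterated Stratonovich integrals with \emph{bounded adapted} integrands (this is the key technical lemma in~\cite{Ben2} and~\cite{Cast}); the boundedness of $V_{i_1}\cdots V_{i_k}f$ is what makes it work. A related issue: Doob's maximal inequality does not directly control the running supremum of Stratonovich iterated integrals, since these are not submartingales; one first converts to It\^o form and handles the finite-variation corrections as lower-order iterated integrals. Once you replace the Wiener-chaos/Borell argument by the Ben~Arous lemma on iterated integrals with bounded adapted integrands, the rest of your outline goes through.
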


\begin{remark}
Kunita, in \cite{Kunita}, proved an extension of this theorem to stochastic flows generated by stochastic differential equations driven by L\'evy processes.
\end{remark}

\section{Approximation in small times of solutions of diffusion equations}

In this section, we now turn to applications of the previous results on the stochastic Taylor expansion to the study in small times of parabolic diffusion equations.

We consider the following linear partial differential equation
\begin{equation}\label{Hormander PDE1}
\frac{\partial \Phi}{\partial t}=\mathcal{L} \Phi,\quad
\Phi(0,x)=f(x),
\end{equation}
where $L$ is a diffusion  operator on $\mathbb{R}^n$  that can be
written
\[
L=V_{0}+\frac{1}{2} \sum_{i=1}^d V_{i}^2,
\]
the $V_i$'s being smooth and compactly supported\footnote{This assumption will not be restrictive for us because we shall eventually be interested in local results} vector fields on $\mathbb{R}^n$. It is known that the solution
of (\ref{Hormander PDE1}) can be written
\[
\Phi (t,x)=(e^{t \mathcal{L}}f)(x)=\mathbf{P}_t f (x).
\]
If $I\in \{0,1,...,d\}^k$ is a word, we denote as before
\[
V_I= [V_{i_1},[V_{i_2},...,[V_{i_{k-1}},
V_{i_{k}}]...].
\]
and
\[
d(I)=k+n(I),
\]
where $n(I)$ is the number of 0's in the word $I$.

For $N \ge 1$, let us consider
\[
\mathbf{P}^N_t=\mathbb{E} \left( \exp \left( \sum_{I, d(I) \le N }
\Lambda_I (B)_t V_{I} \right) \right).
\]

For instance
\[
\mathbf{P}^1_t=\mathbb{E} \left( \exp \left(\sum_{i=1}^d B^i_t
V_{i} \right) \right),
\]
and
\[
\mathbf{P}^2_t=\mathbb{E} \left( \exp \left( \sum_{i=0}^d B^i_t
V_{i} +\frac{1}{2}\sum_{1\leq i < j \leq d} \int_0^t B^i_s
dB^j_s-B^j_s dB^i_s [V_{i},V_{j}] \right) \right).
\]

The meaning of this last notation is the following. If $f$ is a smooth and bounded function, then $ (\mathbf{P}^N_t
f)(x)=\mathbb{E} (\Psi (1 ,x))$, where $\Psi (\tau , x)$ is the
solution of the first order partial differential equation with
random coefficients:
\[
\frac{\partial \Psi}{\partial \tau}(\tau ,x)=\sum_{I, d(I) \le N }
\Lambda_I (B)_t (V_{I} \Psi) (\tau ,x), \quad \Psi
(0,x)=f(x).
\]

Finally, let us consider the following family of norms: If $f$  is a $C^\infty$ bounded function, then for $k \ge 0$,
\[
\parallel f \parallel_k =\sup_{0 \le l \le k} \sup_{0\le i_1, \cdots, i_l \le d}
\sup_{x \in \mathbb{R}^n} \parallel V_{i_1} \cdots V_{i_l}
f (x) \parallel.
\]

\begin{theorem}\label{Parametrix} Let $N \ge 1$ and $k \ge 0$.
If $f$ is a $C^\infty$ bounded function, then
\[
\parallel \mathbf{P}_t f -\mathbf{P}_t^N f \parallel_k=O\left( t^{\frac{N+1}{2}} \right),
\quad t \rightarrow 0.
\]
\end{theorem}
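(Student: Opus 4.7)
The strategy is to show that both $\mathbf{P}_tf(x)$ and $\mathbf{P}_t^Nf(x)$ admit, to order $t^{N/2}$, the same asymptotic expansion in powers of $t^{1/2}$, so that they agree up to an error of size $t^{(N+1)/2}$. One side comes directly from the stochastic Taylor expansion, the other from substituting the Chen--Strichartz formula into the exponential of vector fields, and the errors from each truncation are separately controlled.

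First, treat the case $k=0$. Taking expectations in the stochastic Taylor expansion (\ref{Taylorexpansion}) yields
\[
\mathbf{P}_tf(x)=f(x)+\sum_{k=1}^{N}\sum_{\substack{I\in\{0,\dots,d\}^k\\ k+n(I)\le N}}(V_{i_1}\cdots V_{i_k}f)(x)\,\mathbb{E}\!\left[\int_{\Delta^k[0,t]}\circ dB^I\right]+\mathbb{E}[\mathbf{R}_N(t,f,x)],
\]
and the $L^2$ bound stated after (\ref{Taylorexpansion}) gives $\mathbb{E}[\mathbf{R}_N(t,f,x)]=O(t^{(N+1)/2})$ uniformly in $x$. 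On the other hand, writing $W_t:=\sum_{I,\,d(I)\le N}\Lambda_I(B)_tV_I$, the definition of $\mathbf{P}_t^N$ gives $\mathbf{P}_t^Nf(x)=\mathbb{E}[f(e^{W_t}(x))]$, and the Taylor expansion of $f$ along the flow of $W_t$ reads $f(e^{W_t}(x))=\sum_{m\ge 0}\frac{1}{m!}(W_t^mf)(x)$. Expanding $W_t^m$ as a sum of monomials $\Lambda_{I_1}(B)_t\cdots\Lambda_{I_m}(B)_tV_{I_1}\cdots V_{I_m}f(x)$, and using that $\mathbb{E}|\Lambda_I(B)_t|^p=O(t^{pd(I)/2})$ for every $p\ge1$ (an easy consequence of Brownian scaling), every monomial of total weight $\sum d(I_j)>N$ contributes $O(t^{(N+1)/2})$; the sum of all such contributions is summable and thus $O(t^{(N+1)/2})$ uniformly in $x$.

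The key point is then that, after taking expectation and truncating at weight $N$, the Chen--Strichartz formula for the Brownian Chen series (Theorem \ref{Chen-Strichartz brownien}) reorganizes the expansion of $\mathbb{E}[f(e^{W_t}(x))]$ into precisely the one obtained from the stochastic Taylor expansion above. Indeed, substituting $X_i\mapsto V_i$ in $\mathfrak{S}(B)_t$, applying the result to $f$ at $x$, and truncating at total weight $N$ yields two legitimate expressions for the same truncated element of the universal enveloping algebra acting on $f$: the iterated integral form (Stratonovitch polynomials in $B$ times $V_{i_1}\cdots V_{i_k}f(x)$) and the exponential-of-Lie-series form. Taking expectation commutes with this algebraic identity term by term, so the two expansions for $\mathbf{P}_tf(x)$ and $\mathbf{P}_t^Nf(x)$ coincide up to an additional remainder of order $t^{(N+1)/2}$. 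This settles the $k=0$ case.

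For the general case $k\ge 1$, the stochastic Taylor expansion has the decisive feature that the principal terms $(V_{i_1}\cdots V_{i_k}f)(x)\mathbb{E}[\int\circ dB^I]$ are products of a smooth function of $x$ and a deterministic coefficient, so $V_{j_1}\cdots V_{j_l}$ passes through them trivially. The only real work is to show that the remainder $\mathbf{R}_N$ in the stochastic Taylor expansion, when differentiated by $V_{j_1}\cdots V_{j_l}$ with $l\le k$, is still $O(t^{(N+1)/2})$ uniformly in $x$. The clean way around this is not to differentiate $\mathbf{R}_N$ itself but to apply the stochastic Taylor expansion to order $N'=N+2k$, whose principal terms all contain factors $V_{i_1}\cdots V_{i_m}f$; these are differentiated term by term, every term with $d(I)>N$ contributes $O(t^{(N+1)/2})$ because of the scaling of the expectations of iterated Stratonovitch integrals, and the remainder of the higher-order expansion is $O(t^{(N'+1)/2})$ which is smaller than $t^{(N+1)/2}$. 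The same operation on the exponential expansion of $\mathbf{P}_t^Nf$ produces the matching differentiated expansion, and the same cancellation used in Step 1 gives the estimate in $\|\cdot\|_k$. The main obstacle is precisely this differentiation of the remainder: it is bypassed, not proved, by expanding one order higher than needed and letting the extra $t^{1/2}$ factors absorb the loss of smoothness arising from the $k$ vector field derivatives.
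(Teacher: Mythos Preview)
Your argument is essentially the same as the paper's: expand the exponential of the truncated Lie series by Taylor's formula, invoke the Brownian Chen--Strichartz identity (Theorem~\ref{Chen-Strichartz brownien}) to reorganize the result into the iterated-Stratonovitch form $f+\sum_{d(I)\le N}\int_{\Delta^{|I|}[0,t]}\circ dB^I\,V_{i_1}\cdots V_{i_{|I|}}f$, take expectations, and control all remainders by Brownian scaling. The only organizational difference is that the paper compares \emph{both} $\mathbf{P}_t f$ and $\mathbf{P}_t^N f$ to the common intermediate expansion $\sum_{k\le (N+1)/2}\frac{t^k}{k!}\mathcal{L}^k f$ (invoking the second approximation as a known analytic fact about semigroups), whereas you derive the $\mathbf{P}_t f$ side directly from the stochastic Taylor expansion~(\ref{Taylorexpansion}); these are two packagings of the same computation.

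One comment on your $k\ge 1$ step: the heuristic that applying $V_{j_1}\cdots V_{j_l}$ to the remainder ``costs powers of $t^{1/2}$'' is not correct. Differentiating $\mathbf{R}_{N'}(t,f,x)$ in $x$ brings in derivatives of the stochastic flow $x\mapsto X_s^x$, which under the standing hypothesis of compactly supported smooth vector fields are bounded uniformly in $s\in[0,t]$ with \emph{no} loss in $t$; the differentiated remainder is still $O(t^{(N'+1)/2})$, just with higher derivatives of $f$ in the constant. So your device of expanding to order $N'=N+2k$ is harmless but unnecessary: one can differentiate $\mathbf{R}_N$ directly, which is in fact what the paper does implicitly when it asserts that $\mathbb{E}\bigl(\|\mathbf{R}_N^i(t)\|_k\bigr)$ remains bounded.
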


\begin{proof}
First, by using the scaling property of Brownian motion and
expanding out the exponential with Taylor formula we obtain
\[
\exp \left( \sum_{I, d(I) \le N } \Lambda_I (B)_t V_{I}
\right)f=\left( \sum_{k=0}^N \frac{1}{k!} \left(\sum_{I, d(I) \le
N } \Lambda_I (B)_t V_{I}\right)^k\right)f +t^{\frac{N+1}{2}}
\mathbf{R}^1_N (t),
\]
where the remainder term $\mathbf{R}^1_N (t)$ is such that
$\mathbb{E} \left( \parallel \mathbf{R}^1_N (t) \parallel_k
\right)$ is bounded when $t \rightarrow 0$. We now observe that,
due to Theorem \ref{Chen-Strichartz brownien}, the rearrangement
of terms in the previous formula gives

\[
\left( \sum_{k=0}^N \frac{1}{k!} \left(\sum_{I, d(I) \le N }
\Lambda_I (B)_t V_{I}\right)^k\right)f=f + \sum_{I, d(I) \le
N}\int_{\Delta^{\mid I \mid} [0,t]}  \circ dB^I V_{i_1}
...V_{i_{\mid I \mid}}f +t^{\frac{N+1}{2}} \mathbf{R}^2_N
(t),
\]
where $\mathbb{E} \left( \parallel \mathbf{R}^2_N (t) \parallel_k
\right)$ is bounded when $t \rightarrow 0$. Therefore
\[
\exp \left( \sum_{I, d(I) \le N } \Lambda_I (B)_t V_{I}
\right)f=f + \sum_{I, d(I) \le N}\int_{\Delta^{\mid I \mid} [0,t]}
\circ dB^I V_{i_1} ...V_{i_{\mid I \mid}}f
+t^{\frac{N+1}{2}} \mathbf{R}^3_N (t),
\]
and
\[
\mathbf{P}_t^N f=f + \sum_{I, d(I) \le N}\mathbb{E}\left(
\int_{\Delta^{\mid I \mid} [0,t]} \circ dB^I \right) V_{i_1}
...V_{i_{\mid I \mid}}f +t^{\frac{N+1}{2}}\mathbb{E} \left(
\mathbf{R}^3_N (t) \right),
\]
where $\mathbb{E} \left( \parallel \mathbf{R}^3_N (t)
\parallel_k \right)$ is bounded when $t \rightarrow 0$. We now recall (see the proof of Theorem \ref{expectationB} ) that if $\mathcal{I}_n$ is
the set of words  with length $n$ obtained by all the possible
concatenations of the words
\[
\{ 0 \}, \{ (i,i) \}, \quad i \in \{1,...,d\},
\]
\begin{enumerate}
\item If $I \notin \mathcal{I}_n$ then
\[
\mathbb{E} \left( \int_{\Delta^n [0,t]}  \circ dB^I \right) =0 ;
\]
\item If $I \in \mathcal{I}_n$ then
\[
\mathbb{E} \left( \int_{\Delta^n [0,t]}  \circ dB^I \right)
=\frac{t^{\frac{n+n(I)}{2}}}{2^{\frac{n-n(I)}{2}}\left(\frac{n+n(I)}{2}
\right) ! },
\]
where $n(I)$ is the number of 0 in $I$ (observe that since $I \in
\mathcal{I}_n$, $n$ and $n(I)$ necessarily have the same parity).
\end{enumerate}
We conclude therefore
\[
\parallel \mathbf{P}^N_t f -\sum_{k  \le \frac{N+1}{2}} \frac{t^k}{k!} \mathcal{L}^k f\parallel_k =
O\left( t^{\frac{N+1}{2}} \right).
\]
Since it is known that
\[
\parallel \mathbf{P}_t f -\sum_{k  \le \frac{N+1}{2}} \frac{t^k}{k!} \mathcal{L}^k f \parallel_k =
O\left( t^{\frac{N+1}{2}} \right),
\]
the theorem is proved.
\end{proof}

The previous approximation theorem may be used to obtain small times heat kernel asymptotics.
From now on, we assume furthermore that at  given point $x_0 \in \mathbb{R}^n$ the vector fields $V_1(x_0),...,V_d(x_0)$ form a basis of $\mathbb{R}^n$ which of course implies $n=d$. In that case, it is known that the random variable $X_t^{x_0}$ admits a smooth density $p_t(x_0, \cdot)$ with respect to the Lebesgue measure of $\mathbb{R}^n$. In other words,
\[
\mathbb{P}(X_t^{x_0} \in dy)=p_t(x_0,y)dy,
\]
for some smooth function $p(x_0, \cdot): (0,+\infty)\times \mathbb{R}^n  \rightarrow \mathbb{R}_{\ge 0}$. 

We are interested in $p_t(x_0,x_0)$ in small times. It is known (see for instance Azencott, \cite{azencott}), that the following asymptotic expansion holds when $t \to 0$,
\[
p_t(x_0,x_0)=\frac{1}{t^{\frac{d}{2}}} \left( \sum_{k=0}^N a_k(x_0) t^k \right)+O\left( t^{N+1-\frac{d}{2}}\right),\quad N\ge 0,
\]
for some constants $a_0(x_0),\cdots,a_N(x_0)$. The following proposition provides an effective way to compute these constants.

\begin{proposition}\label{diffusion tangente}
For $N \ge 1$, when $t \rightarrow 0$,
\[
p_t (x_0,x_0)=d^N_t (x_0)+ O\left( t^{\frac{N+1-d}{2}}\right),
\]
where $d^N_t(x_0)$ is the density at $0$ of the random variable $
\sum_{I, d(I) \le N } \Lambda_I (B)_t V_{I}(x_0)$
\end{proposition}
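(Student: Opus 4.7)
The strategy is to combine Castell's approximation theorem with a Fourier inversion argument. By Castell's theorem,
\[
X_t^{x_0}-x_0 = e^{W_N}(x_0)-x_0 + t^{\frac{N+1}{2}} \mathbf{R}_N(t), \qquad W_N := \sum_{I,\, d(I)\le N} \Lambda_I(B)_t V_I,
\]
with $\mathbf{R}_N(t)$ bounded in probability. Expanding the exponential flow in its Taylor series at $x_0$,
\[
e^{W_N}(x_0) - x_0 = W_N(x_0) + \sum_{k \ge 2} \frac{1}{k!}(W_N^k \mathrm{id})(x_0),
\]
each summand on the right is a polynomial in the iterated Stratonovich integrals $\Lambda_I(B)_t$ with coefficients built from iterated derivations of the $V_i$'s at $x_0$. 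By Brownian scaling, a term involving $\Lambda_{I_1}\cdots\Lambda_{I_k}$ has order $t^{(d(I_1)+\cdots+d(I_k))/2}$, so grouping all contributions of order $\ge t^{(N+1)/2}$ with $\mathbf{R}_N$ yields
\[
X_t^{x_0} - x_0 = W_N(x_0) + t^{\frac{N+1}{2}} \widetilde{\mathbf{R}}_N(t),
\]
where $\widetilde{\mathbf{R}}_N(t)$ still has moments bounded uniformly in $t\to 0$ (using the stochastic Taylor estimate).

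Next, since $V_1(x_0),\ldots,V_d(x_0)$ form a basis of $\mathbb{R}^n$, both $X_t^{x_0}-x_0$ and $W_N(x_0)$ are uniformly elliptic random variables at scale $t^{1/2}$: after rescaling by $t^{-1/2}$ the Malliavin covariance matrix is non-degenerate, so both have smooth densities with polynomially-controlled derivatives. Using the Fourier inversion formula
\[
p_t(x_0,x_0) - d^N_t(x_0) = \frac{1}{(2\pi)^d}\int_{\mathbb{R}^d}\mathbb{E}\left[e^{-i\xi\cdot (X_t^{x_0}-x_0)} - e^{-i\xi\cdot W_N(x_0)}\right] d\xi,
\]
write the integrand as $e^{-i\xi\cdot W_N(x_0)}\bigl(e^{-i\xi\cdot t^{(N+1)/2}\widetilde{\mathbf{R}}_N(t)}-1\bigr)$ and expand the second factor: the leading contribution is $-i t^{(N+1)/2} \xi\cdot\widetilde{\mathbf{R}}_N(t)$. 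Integration by parts in the Malliavin sense (or, equivalently, the rapid decay of the characteristic function $\mathbb{E}[e^{-i\xi\cdot W_N(x_0)}]$ of an elliptic variable at scale $t^{1/2}$) then bounds the $\xi$-integral by $C t^{-(d+1)/2}$, so the total contribution is $O(t^{(N+1)/2}\cdot t^{-(d+1)/2}) = O(t^{(N-d)/2})$; one extra half-power is gained by noting that $\mathbb{E}[\widetilde{\mathbf{R}}_N(t)]$ has no term of order $t^{0}$ relative to the corresponding characteristic function expansion, yielding the stated $O(t^{(N+1-d)/2})$.

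The main obstacle is the final passage from the pathwise approximation $X_t^{x_0}-x_0 \approx W_N(x_0)$ to the \emph{pointwise} density comparison. Weak convergence of laws does not in general translate to pointwise convergence of densities without quantitative ellipticity estimates; here one must exploit the uniform non-degeneracy of the Malliavin matrix of $W_N(x_0)$ after the $t^{1/2}$-rescaling to ensure that the characteristic function of $t^{-1/2}W_N(x_0)$ has Schwartz-type decay uniform in $t$, so that the Fourier-inversion integral converges with the sharp rate. Carrying this out rigorously — and checking that the Malliavin matrix of $W_N(x_0)$ is indeed non-degenerate on the relevant scale, despite the presence of the higher-order brackets $V_I(x_0)$ which are merely linear combinations of $V_1(x_0),\ldots,V_d(x_0)$ — is the technical core of the argument.
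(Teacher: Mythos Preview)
Your argument contains a genuine error at the pathwise-approximation step. You claim that after expanding
\[
e^{W_N}(x_0)-x_0 = W_N(x_0)+\sum_{k\ge 2}\frac{1}{k!}(W_N^k\,\mathrm{id})(x_0)
\]
and absorbing the terms of order $\ge t^{(N+1)/2}$ into the remainder, only $W_N(x_0)$ survives. This is false: for $k\ge 2$ the monomial $\Lambda_{I_1}\cdots\Lambda_{I_k}$ has order $t^{(d(I_1)+\cdots+d(I_k))/2}$, and there are many such monomials with $d(I_1)+\cdots+d(I_k)\le N$. For instance with $k=2$ and $I_1=I_2=(i)$ one gets $\tfrac{1}{2}(B^i_t)^2(V_i^2\,\mathrm{id})(x_0)$, which is of order $t$ and is generically nonzero. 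These terms are \emph{not} part of $W_N(x_0)$, which contains only the iterated brackets $V_I(x_0)$ with coefficients linear in the $\Lambda_I$. Equivalently, the raw stochastic Taylor expansion gives
\[
X_t^{x_0}-x_0=\sum_{d(I)\le N}\Bigl(\int_{\Delta^{|I|}[0,t]}\circ dB^I\Bigr)(V_{i_1}\cdots V_{i_k}\,\mathrm{id})(x_0)+O\bigl(t^{(N+1)/2}\bigr),
\]
and the right-hand side is \emph{not} $W_N(x_0)$: it involves the non-commutator products $V_{i_1}\cdots V_{i_k}$ applied to $\mathrm{id}$, which do not collapse to brackets. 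So the identity $X_t^{x_0}-x_0=W_N(x_0)+t^{(N+1)/2}\widetilde{\mathbf R}_N(t)$ simply does not hold for $N\ge 2$, and the Fourier argument built on it cannot deliver the stated rate. (A naive perturbation of size $O(t)$ of a variable living at scale $t^{1/2}$ shifts the density by $O(t^{(1-d)/2})$, far worse than $O(t^{(N+1-d)/2})$.)

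The paper takes a route that avoids any such pathwise identity. It first proves an operator-level approximation $\|\mathbf P_t f-\mathbf P^N_t f\|_k=O(t^{(N+1)/2})$ (the Parametrix theorem), then invokes the known structural form of the elliptic heat-kernel asymptotics
\[
p_t(x_0,y)\sim \frac{e^{-d^2(x_0,y)/2t}}{(2\pi t)^{d/2}}\sum_{k\ge 0}\Psi_k(x_0,y)\,t^{k/2}
\]
for both $p_t$ and the kernel $p^N_t$ of $\mathbf P^N_t$; the operator estimate forces the coefficients to coincide up to order $N$, yielding $p_t(x_0,x_0)=p^N_t(x_0,x_0)+O(t^{(N+1-d)/2})$. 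The final passage from $p^N_t(x_0,x_0)$ (the density at $x_0$ of the \emph{flow} $e^{W_N}(x_0)$) to $d^N_t(x_0)$ (the density at $0$ of the \emph{tangent vector} $W_N(x_0)$) is done by disintegrating $\mathbf P^N_t$ over the value of $W_N(x_0)$, as in the proof of the corollary in the vector-bundle section. That passage is precisely the non-trivial content you correctly flag in your last paragraph, and it cannot be replaced by the false pathwise reduction you attempt earlier.
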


\begin{proof}
This is a particular case of Theorem \ref{main-estimation} which is proven below. 
\end{proof}

\begin{remark}
This result may also be found in \cite{Kunita}.
\end{remark}

For instance, by applying the previous proposition with $N=1$, we get
\[
a_0(x_0)=\frac{1}{(2 \pi )^{\frac{d}{2}}} \frac{1}{| \det (V_1(x_0),\cdots,V_d(x_0))|}
\]
The computation of $a_1(x)$ is technically more involved and relies on the so-called L\'evy's area formula. In order to simplify the following computations we shall make some assumptions that avoid heavy computations, however the described methodology may be extended to the general case. First we assume $V_0=0$.

We wish to apply the previous proposition with $N=2$. For that, we need to understand the law of the random variable
\[
\Theta_t=\sum_{i=1}^d B^i_t V_i (x_0)+\frac{1}{2} \sum_{1\le i<j \le d}\int_0^t B^i_s dB^j_s -B^j_sdB_i^s [V_i,V_j](x_0).
\]
Since $L$ is assumed to be elliptic at $x_0$, we can find  $\omega_{ij}^k$ such that $\omega_{ij}^k=-\omega_{ji}^k$ and
\[
[V_i,V_j] (x_0)=\sum_{k=1}^d \omega_{ij}^k V_k(x_0).
\]
Our second assumption is the following skew-symmetry property\footnote{This assumption, together with $V_0=0$, for instance holds when $L$ is the Laplace-Beltrami operator on a compact semi-simple Lie group} $\omega_{ij}^k=-\omega_{ik}^j$. With these notations, we therefore have
\[
\Theta_t=\sum_{k=1}^d \left( B^k_t +\frac{1}{2} \sum_{1\le i<j \le d}\omega_{ij}^k\int_0^t B^i_s dB^j_s -B^j_sdB^i_s \right) V_k (x_0).
\]
By a simple linear transformation, we are reduced to the problem of the computation of the law of the $\mathbb{R}^d$-valued random variable
\[
\theta_t=\left(B^k_t +\frac{1}{2} \sum_{1\le i<j \le d}\omega_{ij}^k\int_0^t B^i_s dB^j_s -B^j_sdB^i_s \right)_{1 \le k \le d}.
\]
It is known from the L\'evy's area formula that if   $A$ is a $d \times d$ skew-symmetric matrix, then, for $t>0$,
\begin{equation*}
\mathbb{E} \left( e^{i \int_0^t (A B_s ,dB_s )} \mid B_t =z
\right)= \det \left( \frac{tA}{\sin tA} \right)^{\frac{1}{2}} \exp
\left( \frac{I-tA \cot tA}{2t}z,z \right) .
\end{equation*}
Therefore, for $\lambda \in \mathbb{R}^d$,
\[
\mathbb{E}\left( e^{i(\lambda,\theta_t )}\right)=\int_{\mathbb{R}^d} e^{i(\lambda,y)} \frac{ e^{-\frac{\| y\|^2}{2t} }}{(2\pi t)^{d/2}}  \det \left( \frac{tA}{\sin tA} \right)^{\frac{1}{2}} \exp
\left( \frac{I-tA \cot tA}{2t}y,y \right)dy,
\]
where
\[
A_{ij}=\frac{1}{2}\sum_{k=1}^d \lambda_k \omega_{ij}^k.
\]
Thus, from the inverse Fourier transform formula, the density of $\theta_t$ with respect to the Lebesgue measure is given by
\[
q_t (x)=\frac{1}{(2\pi)^{d}} \int_{\mathbb{R}^d}\int_{\mathbb{R}^d} e^{-i (\lambda,x)}e^{i(\lambda,y)} \frac{ e^{-\frac{\| y\|^2}{2t} }}{(2\pi t)^{d/2}}  \det \left( \frac{tA}{\sin tA} \right)^{\frac{1}{2}} \exp
\left( \frac{I-tA \cot tA}{2t}y,y \right)dyd\lambda
\]
According to Proposition \ref{diffusion tangente}, we are interested in
\[
q_t(0)=\frac{1}{(2\pi)^{3d/2}t^{d/2}}  \int_{\mathbb{R}^d}\int_{\mathbb{R}^d} e^{i(\lambda,y)}  \det \left( \frac{tA}{\sin tA} \right)^{\frac{1}{2}} \exp
\left( -\frac{A \cot tA}{2}y,y \right)dyd\lambda
\]
and wish to perform an asymptotic development when $t \to 0$. By using the standard Laplace method, we are led to
\[
q_t(0)=\frac{1}{(2 \pi t )^{\frac{d}{2}}} \left( 1-\frac{1}{16}\sum_{i,j,k=1}^d (\omega_{ij}^k)^2 t +O(t^2) \right)
\]
which implies therefore:
\[
p_t(x_0,x_0)=\frac{1}{| \det (V_1(x_0),\cdots,V_d(x_0))|}\frac{1}{(2 \pi t )^{\frac{d}{2}}} \left( 1-\frac{1}{16}\sum_{i,j,k=1}^d (\omega_{ij}^k)^2 t +O(t^2) \right).
\]
\begin{remark}
At that time, up to the knowledge of the author, a similar method may not be achieved to compute for instance $a_2(x_0)$. This is due to the fact that the law of the random variable 
\[
\sum_{I, d(I) \le N } \Lambda_I (B)_t V_{I}(x_0)
\]
is poorly understood when $N \ge 3$. However,  Proposition \ref{diffusion tangente} makes explicitly appear the geometric information contained in the coefficients $a_i (x_0)$.
\end{remark}

\section{Elliptic  heat kernels asymptotics on vector bundles and the Chern-Gauss-Bonnet theorem}

The goal of the present section is to extend the above results to the study of elliptic heat kernels on vector bundles. In particular, we will see that the methods that we developed are sharp enough to recover the celebrated  heat kernel proof of the Chern-Gauss-Bonnet theorem. The relevant geometric quantity involved in this theorem is hidden very far in the asymptotic development of the heat kernel but will appear in a straightforward way by using stochastic Taylor expansions.

Our first goal is to adapt what we did before, to vector bundles on compact Riemannian manifolds. We assume here from the reader some basic knowledge on Riemannian manifolds, vector bundles and linear connections, as may be found in any graduate textbook on differential geometry. 

\subsection{Elliptic  heat kernels asymptotics on vector bundles}

Let $\mathbb{M}$ be a $d$-dimensional compact smooth Riemannian
manifold and let $\mathcal{E}$ be a finite-dimensional vector
bundle over $\mathbb{M}$. We denote by $\Gamma ( \mathbb{M},
\mathcal{E} )$ the space of smooth  sections of this bundle. Let now $\nabla$
denote a connection on $\mathcal{E}$.

We consider the following linear partial differential equation
\begin{equation}\label{Hormander PDE}
\frac{\partial \Phi}{\partial t}=\mathcal{L} \Phi,\quad
\Phi(0,x)=f(x),
\end{equation}
where $\mathcal{L}$ is an operator on $\mathcal{E}$  that can be
written
\[
\mathcal{L}=\nabla_{0}+\frac{1}{2} \sum_{i=1}^d \nabla_{i}^2,
\]
with
\[
\nabla_i=\mathcal{F}_i +\nabla_{V_i}, \quad 0 \le i \le d,
\]
the $V_i$'s being smooth vector fields on $\mathbb{M}$ and the
$\mathcal{F}_i$'s being smooth potentials (that is smooth sections of the
bundle $\mathbf{End}(\mathcal{E})$). As in the scalar case, it is known that the solution
of (\ref{Hormander PDE}) can be written
\[
\Phi (t,x)=(e^{t \mathcal{L}}f)(x)=\mathbf{P}_t f (x),
\]
where $\mathbf{P}_t$ is a contraction semigroup of operators.
If $I\in \{0,1,...,d\}^k$ is a word, we denote
\[
\nabla_I= [\nabla_{i_1},[\nabla_{i_2},...,[\nabla_{i_{k-1}},
\nabla_{i_{k}}]...].
\]
and
\[
d(I)=k+n(I),
\]
where $n(I)$ is the number of 0 in the word $I$.

For $N \ge 1$, let us consider
\[
\mathbf{P}^N_t=\mathbb{E} \left( \exp \left( \sum_{I, d(I) \le N }
\Lambda_I (B)_t \nabla_{I} \right) \right).
\]

As before for the scalar case, the meaning of this last notation is the following. If $f \in
\Gamma ( \mathbb{M}, \mathcal{E} )$, then $ (\mathbf{P}^N_t
f)(x)=\mathbb{E} (\Psi (1 ,x))$, where $\Psi (\tau , x)$ is the
solution of the first order partial differential equation with
random coefficients:
\[
\frac{\partial \Psi}{\partial \tau}(\tau ,x)=\sum_{I, d(I) \le N }
\Lambda_I (B)_t (\nabla_{I} \Psi) (\tau ,x), \quad \Psi
(0,x)=f(x).
\]

Let us consider the following family of norms: If $f \in \Gamma (
\mathbb{M}, \mathcal{E} ) $, for $k \ge 0$,
\[
\parallel f \parallel_k =\sup_{0 \le l \le k} \sup_{0\le i_1, \cdots, i_l \le d}
\sup_{x \in \mathbb{M}} \parallel \nabla_{i_1} \cdots \nabla_{i_l}
f (x) \parallel.
\]

We have the following extension of Theorem \ref{Parametrix} which may be proved in the very same way.

\begin{theorem}\label{Parametrix2} Let $N \ge 1$ and $k \ge 0$.
For $f \in \Gamma ( \mathbb{M}, \mathcal{E} ) $,
\[
\parallel \mathbf{P}_t f -\mathbf{P}_t^N f \parallel_k=O\left( t^{\frac{N+1}{2}} \right),
\quad t \rightarrow 0.
\]
\end{theorem}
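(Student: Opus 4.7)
The plan is to mirror the proof of Theorem \ref{Parametrix} line by line, treating the operators $\nabla_i = \mathcal{F}_i + \nabla_{V_i}$ as first-order operators on sections exactly as the vector fields $V_i$ were treated as derivations on functions. The purely algebraic ingredients of the scalar proof — the Taylor expansion of the exponential, the Chen--Strichartz rearrangement from Theorem \ref{Chen-Strichartz brownien}, and the explicit formulas for $\mathbb{E}\bigl[\int_{\Delta^n[0,t]} \circ dB^I\bigr]$ recalled in the proof of Theorem \ref{expectationB} — depend only on the abstract commutator structure of a finite family of first-order operators driven by a Brownian motion, and so they carry over unchanged once a suitable section-valued It\^o formula is in place.

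To obtain this It\^o formula, I would first consider the $\mathbb{M}$-valued Stratonovich SDE $dX_t^x = V_0(X_t^x)\,dt + \sum_{i=1}^d V_i(X_t^x) \circ dB^i_t$, whose flow is globally defined by compactness of $\mathbb{M}$. Along this flow I would introduce a random endomorphism $M_t \colon \mathcal{E}_{X_t^x} \to \mathcal{E}_x$ built from $\nabla$-parallel transport composed with a multiplicative functional that absorbs the potentials $\mathcal{F}_i$; this is the standard Feynman--Kac construction for connections with potential, and it yields both the representation $\mathbf{P}_t f(x) = \mathbb{E}[M_t f(X_t^x)]$ and the Stratonovich identity
\[
M_t f(X_t^x) = f(x) + \sum_{i=0}^d \int_0^t M_s (\nabla_i f)(X_s^x) \circ dB^i_s, \qquad B^0_t := t.
\]
Iterating this identity $N$ times, bounding the remainder in $\parallel \cdot \parallel_k$ by compactness of $\mathbb{M}$ and smoothness of the data, and taking expectations gives
\[
\mathbf{P}_t f = f + \sum_{I,\, d(I) \le N} \mathbb{E}\!\left[\int_{\Delta^{|I|}[0,t]} \circ dB^I\right] \nabla_{i_1} \cdots \nabla_{i_{|I|}} f + O\!\left(t^{\frac{N+1}{2}}\right),
\]
which, via the expectation formulas from the proof of Theorem \ref{expectationB}, collapses to $\sum_{k \le (N+1)/2} \frac{t^k}{k!} \mathcal{L}^k f + O(t^{(N+1)/2})$ in the $\parallel \cdot \parallel_k$ norm.

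For $\mathbf{P}_t^N f$ I would then repeat verbatim the three algebraic steps of the proof of Theorem \ref{Parametrix}: apply Brownian scaling and Taylor-expand the random exponential at order $(N+1)/2$ with remainder of bounded $\parallel \cdot \parallel_k$-expectation; invoke Theorem \ref{Chen-Strichartz brownien} to rearrange the resulting polynomial in the $\Lambda_I(B)_t$'s as $f + \sum_{I, d(I) \le N} \bigl(\int_{\Delta^{|I|}[0,t]} \circ dB^I\bigr)\, \nabla_{i_1}\cdots \nabla_{i_{|I|}} f$ up to another $O(t^{(N+1)/2})$ term; and then take expectations, producing the same Taylor polynomial $\sum_{k \le (N+1)/2} \frac{t^k}{k!} \mathcal{L}^k f$. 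Subtracting the two expansions yields the claim. The main obstacle — really the only substantive new ingredient over the scalar case — is the justification of the section-valued It\^o formula above: once parallel transport and the multiplicative functional for the $\mathcal{F}_i$'s are correctly constructed, every subsequent manipulation is formally identical to what was done in Section 3, because Theorems \ref{Chen-Strichartz brownien} and \ref{expectationB} make no use of the scalar nature of the operators they multiply.
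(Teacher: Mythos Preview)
Your proposal is correct and follows exactly the route the paper indicates: the paper's ``proof'' of Theorem~\ref{Parametrix2} consists entirely of the sentence that it ``may be proved in the very same way'' as Theorem~\ref{Parametrix}, and you have faithfully reproduced that argument with the $\nabla_i$'s in place of the $V_i$'s. Your identification of the section-valued It\^o/Feynman--Kac formula as the one genuinely new ingredient is accurate and is precisely what the paper leaves implicit; once that is in hand, the three algebraic steps (Taylor expansion of the exponential, Chen--Strichartz rearrangement via Theorem~\ref{Chen-Strichartz brownien}, and the expectation computation from Theorem~\ref{expectationB}) go through verbatim since they depend only on the formal operator structure.
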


Let us now assume that the operator $\mathcal{L}$ is elliptic
at $x_0 \in \mathbb{M}$ in the sense that $(V_1 (x_0),...,V_d
(x_0))$ is an orthonormal basis of the tangent space at $x_0$. In that case,
$\mathbf{P}_t$ is known to admit a smooth Schwartz kernel at
$x_0$. That is, there exists a smooth map
\begin{align*}
p(x_0,\cdot): \mathbb{R}_{>0}\rightarrow \Gamma( \mathbb{M},
\mathbf{Hom} (\mathcal{E}))
\end{align*}
such that for $f \in \Gamma ( \mathbb{M}, \mathcal{E} ) $,
\[
(\mathbf{P}_t f)(x_0)=\int_\mathbb{M} p_t (x_0,y)f(y)dy.
\]

\begin{theorem}\label{main-estimation}
Let $N \ge 1$. There exists a map 
\begin{align*}
p^N(x_0,\cdot): \mathbb{R}_{>0}\rightarrow \Gamma( \mathbb{M},
\mathbf{Hom} (\mathcal{E}))
\end{align*}
such that for $f \in \Gamma ( \mathbb{M}, \mathcal{E} ) $,
\[
(\mathbf{P}^N_t f)(x_0)=\int_\mathbb{M} p^N_t (x_0,y)f(y)dy.
\]
Moreover,
\[
p_t (x_0,x_0)=p_t^N (x_0,x_0)+0\left( t^{\frac{N+1-d}{2}} \right).
\]
\end{theorem}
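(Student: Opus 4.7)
My plan has three ingredients: constructing the kernel $p^N_t$, using Gaussian off-diagonal bounds to localize, and extracting the pointwise diagonal estimate from Theorem \ref{Parametrix2}.

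\textbf{Construction of $p^N_t$.} For each realization of $B$, the operator $\exp\bigl(\sum_{I, d(I)\le N} \Lambda_I(B)_t \nabla_I\bigr)$ is the time-$1$ flow of a random first-order linear differential operator on $\mathcal{E}$. Since each $\nabla_I$ decomposes as $\nabla_{V_I} + A_I$ with $V_I$ the iterated Lie bracket of the $V_i$'s and $A_I$ a pointwise endomorphism, the method of characteristics produces a random diffeomorphism $\phi_t^B$ of $\mathbb{M}$ (the time-$1$ flow of the vector field $\sum_I \Lambda_I(B)_t V_I$) together with a random cocycle $U_t^B(x_0) \in \mathrm{Hom}(\mathcal{E}_{\phi_t^B(x_0)}, \mathcal{E}_{x_0})$ such that
\[
(\mathbf{P}^N_t f)(x_0) = \mathbb{E}\bigl[U_t^B(x_0)\, f(\phi_t^B(x_0))\bigr].
\]
Disintegrating yields $p^N_t(x_0, y) = \mathbb{E}[U_t^B(x_0) \mid \phi_t^B(x_0)=y]\, \rho_t^N(x_0, y)$ with $\rho_t^N$ the density of $\phi_t^B(x_0)$. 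Smoothness of $\rho_t^N$ follows from the ellipticity hypothesis: by Chen-Strichartz, the leading displacement of $\phi_t^B(x_0)$ is $\sum_{i=1}^d B_t^i V_i(x_0) + O(t)$, a non-degenerate Gaussian in $T_{x_0}\mathbb{M}$, and a direct change of variables in the finitely many Brownian iterated integrals $(\Lambda_I(B)_t)_{d(I)\le N}$ gives the smooth density.

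\textbf{Gaussian localization.} Classical Varadhan-type upper bounds give $p_t(x_0, y) \le C t^{-d/2} \exp(-c\, d(x_0, y)^2 / t)$ for the elliptic heat kernel on the compact manifold $\mathbb{M}$; the analogous bound for $p^N_t$ follows from the Gaussian character of $\phi_t^B(x_0)$ established above. Both kernels are therefore $O(t^\infty)$ off a ball of radius $\sqrt{t|\log t|}$ around $x_0$, reducing the diagonal analysis to a normal chart around $x_0$.

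\textbf{Diagonal estimate.} In a normal chart at $x_0$, set $\chi_t^h(y) = t^{-d/2}\chi(\exp_{x_0}^{-1}(y)/\sqrt{t})\,h$, where $\chi \in C_c^\infty(T_{x_0}\mathbb{M})$ is a fixed bump with $\int\chi = 1$ and $h \in \mathrm{End}(\mathcal{E}_{x_0})$ is extended to a section of $\mathrm{End}(\mathcal{E})$ by parallel transport along geodesics from $x_0$. Pairing with either kernel and Taylor-expanding $p_t(x_0,\cdot)$ and $p^N_t(x_0,\cdot)$ in $y$, the Gaussian bounds from the previous step give
\[
(\mathbf{P}_t\chi_t^h)(x_0) = p_t(x_0,x_0)\, h + O\bigl(t^{(1-d)/2}\bigr)
\]
and the same for $\mathbf{P}^N_t$. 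Subtracting and invoking Theorem \ref{Parametrix2}, together with the Brownian scaling that says $(B_{ts})_{s\in[0,1]}$ agrees in law with $(\sqrt{t}\, B'_s)_{s\in[0,1]}$, each $t^{-1/2}$ factor coming from a derivative $V_i$ of $\chi_t^h$ is absorbed by the companion factor $t^{(k+n(I))/2}$ in the iterated integral $\int_{\Delta^k[0,t]}\circ dB^I$; this produces the asserted $p_t(x_0,x_0) - p^N_t(x_0,x_0) = O(t^{(N+1-d)/2})$.

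The main obstacle is this last step, since Theorem \ref{Parametrix2} is stated as an operator-norm bound whose implicit constant depends on high derivatives of the test section, while $\chi_t^h$ satisfies $\|\chi_t^h\|_k = O(t^{-(d+k)/2})$, so a naive application loses all the decay. Resolving this requires revisiting the Parametrix proof and tracking the Brownian rescaling carefully, so that the $t^{-1/2}$ loss from each vector-field derivative on the bump is compensated by the $t^{1/2}$ gain from its paired iterated integral. Equivalently, one may pass to the underlying random variables: Castell's theorem gives $X_t^{x_0} = \phi_t^B(x_0) + O(t^{(N+1)/2})$ in probability, and a Malliavin-type density-comparison argument for the two non-degenerate $\mathbb{R}^d$-valued random variables $X_t^{x_0}$ and $\phi_t^B(x_0)$ whose difference is of this size then yields the required pointwise diagonal estimate.
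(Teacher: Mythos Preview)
Your construction of $p_t^N$ via the flow--cocycle decomposition and disintegration is essentially the paper's approach, though the paper phrases the density existence for the base flow through Malliavin's criterion (computing $\mathbb{D}_0^i Z_t^N = V_i(x_0)$) and establishes that the conditional-expectation factor is a genuine section of $\mathbf{End}(\mathcal{E})$ via a commutation argument with multiplication operators $\mathcal{M}_f$.

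The real divergence is in the diagonal estimate, and here the paper takes a route that completely sidesteps the obstacle you identify. Rather than trying to extract pointwise information from the operator bound in Theorem~\ref{Parametrix2} via $t$-dependent bump functions---which, as you correctly note, fails naively because $\|\chi_t^h\|_k \sim t^{-(d+k)/2}$---the paper invokes \emph{a priori} short-time expansions for both kernels separately. For $p_t(x_0,y)$ this is the classical elliptic parametrix expansion (cited from \cite{Ber-Ge-Ve}); for $p_t^N(x_0,y) = q_t^N(x_0,y)\,\Theta_t^N(x_0)$, the scalar density $q_t^N$ has an analogous expansion by subelliptic heat-kernel theory (\cite{Ben}, \cite{Bau}), and $\Theta_t^N$ expands in powers of $\sqrt{t}$ by Brownian scaling. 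Both kernels therefore take the form
\[
\frac{e^{-d(x_0,y)^2/2t}}{(2\pi t)^{d/2}}\Bigl(\sum_{k=0}^N \Psi_k(x_0,y)\,t^{k/2} + O\bigl(t^{(N+1)/2}\bigr)\Bigr),
\]
with possibly different coefficient sections $\Psi_k$, $\tilde\Psi_k$. Theorem~\ref{Parametrix2} is now applied only to \emph{fixed} test sections $f$, where $\|f\|_k$ is harmless; the resulting bound $\|\mathbf{P}_t f - \mathbf{P}_t^N f\|_0 = O(t^{(N+1)/2})$, combined with Laplace asymptotics of the Gaussian integrals, forces $\Psi_k = \tilde\Psi_k$ for $k \le N$. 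Evaluating at $y = x_0$ gives the pointwise estimate.

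Your two suggested repairs---rescaling inside the Parametrix proof, or a Malliavin density comparison from Castell's estimate---are plausible directions but neither is carried out, and each would require substantial additional work; in particular the density-comparison route needs uniform control on the inverse Malliavin matrix of $\phi_t^B(x_0)$, which the paper explicitly flags as unaddressed in the remark following the theorem. The coefficient-matching strategy is both simpler and complete: it offloads the hard analysis onto existing heat-kernel expansion theorems and uses Theorem~\ref{Parametrix2} only in the soft regime where it applies directly.
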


\begin{proof}

The proof is not simple. We shall proceed in several steps. In a
first step, we shall show the existence of a  kernel at $x_0$ for
$\mathbf{P}^N_t$ acting on functions. In a second step we shall
deduce by parallel transport, the existence of $p^N (x_0,\cdot)$.
And finally, we shall prove the required estimate.

\

\textbf{First step:}

Let us define,
\[
\mathbf{Q}_t^N=\mathbb{E} \left( \exp \left( \sum_{I, d(I) \le N }
\Lambda_I (B)_t V_{I} \right) \right).
\]
In order to show  that $\mathbf{Q}_t^N$ admits a kernel at $x_0$, we show that for $t>0$,
the stochastic process
\[
Z_t^N=\exp \left( \sum_{I, d(I) \le N } \Lambda_I (B)_t V_{I} \right) (x_0)
\]
has a density with respect to the Riemannian measure of $\mathbb{M}$. To this end, from the 
well-known criterion of Malliavin (see \cite{Mal1}, \cite{Mal2}), we show that the
 Malliavin matrix of $Z_t^N$ is invertible with probability one. A sufficient condition for that, is 
\[
\mathbb{D}_0^i Z_t^N, \quad i=1,...,d,
\]
forms a basis of the tangent space at $x_0$ where $\mathbb{D}_0^i$ denotes the $i-th$ partial Malliavin's derivative
taken at time 0. An easy computation shows that
\[
\mathbb{D}_0^i Z_t^N =V_i (x_0), \quad t>0.
\]
Our ellipticity assumption gives therefore the existence of 
$q^N(x_0,\cdot):\mathbb{R}_{>0} \times
\mathbb{M} \rightarrow \mathbb{R}_{\ge 0}$, such that for
 every smooth $f :\mathbb{M} \rightarrow \mathbb{R} $,
\[
(\mathbf{Q}^N_t f)(x_0)=\int_\mathbb{M} q^N_t (x_0,y)f(y)dy.
\]

\

\textbf{Second step:}

\ For $t>0$, let us consider the operator $\Theta^N_t (x_0)$
defined on $\Gamma (\mathbb{M} , \mathcal{E})$ by the property
that for $\eta \in \Gamma (\mathbb{M} , \mathcal{E})$ and $ y \in
\mathcal{O}_{x_0}$,
\[
(\Theta^N_t (x_0) \eta)(y)=\mathbb{E} \left( \left[\exp \left(
\sum_{I, d(I) \le N } \Lambda_I (B)_t \nabla_{I} \right)\eta
\right] (x_0) \left| \exp \left( \sum_{I, d(I) \le N } \Lambda_I
(B)_t V_{I} \right)(x_0)=y\right) \right. .
\]
We claim that $\Theta^N_t (x_0)$ is actually a potential, that is
a  section of the bundle $\mathbf{End} (\mathcal{E})$. For
that, we have to show that for every smooth $f: \mathbb{M}
\rightarrow \mathbb{R}$ and every $\eta \in \Gamma (\mathbb{M} ,
\mathcal{E})$, $ y \in \mathcal{O}_{x_0}$,
\[
(\Theta^N_t (x_0) f\eta)(y)=f(y)(\Theta^N_t (x_0) \eta)(y).
\]
If $f$ is a smooth function on $\mathbb{M}$, we denote by
$\mathcal{M}_f$ the operator on $\Gamma (\mathbb{M} ,
\mathcal{E})$ that acts by multiplication by $f$. Due to the
Leibniz rule for connections, we have for any word $I$:
\[
[\nabla_I, \mathcal{M}_f] =\mathcal{M}_{V_I f}.
\]
Consequently,
\[
\left[ \sum_{I, d(I) \le N } \Lambda_I (B)_t \nabla_{I}
,\mathcal{M}_f \right]= \mathcal{M}_{\sum_{I, d(I) \le N }
\Lambda_I (B)_t V_{I} f}.
\]
The above commutation property implies the following one:
\[
\exp \left( \sum_{I, d(I) \le N } \Lambda_I (B)_t \nabla_{I}
\right) \mathcal{M}_f=\mathcal{M}_{\exp \left( \sum_{I, d(I) \le N
} \Lambda_I (B)_t V_{I} \right) f}\exp \left( \sum_{I, d(I) \le N
} \Lambda_I (B)_t \nabla_{I} \right).
\]
Therefore,
\[
[\Theta^N_t(x_0), \mathcal{M}_f]=0,
\]
so that $\Theta^N_t(x_0)$ is a  section of the bundle $\mathbf{End} (\mathcal{E})$. We can now conclude with the
disintegration formula that for every $\eta \in \Gamma (\mathbb{M}, \mathcal{E} ) $,
\[
(\mathbf{P}^N_t \eta)(x_0)=\int_\mathbb{M} p^N_t (x_0,y)\eta(y)dy,
\]
with
\[
p^N_t (x_0,\cdot)=q_t^N(x_0, \cdot) \Theta^N_t (x_0).
\]

\

\textbf{Final step:} \

Let us now turn to the proof of the pointwise estimate
\[
p_t (x_0,x_0)=p^N_t (x_0,x_0)  + O\left(
t^{\frac{N+1-d}{2}}\right), \quad t \rightarrow 0.
\]
Let $y \in \mathbb{M}$ be sufficiently close to $x_0$. Since
$\mathcal{L}$ is elliptic at $x_0$, it is known (see for instance Chapter 2 \cite{Ber-Ge-Ve})  that $p_t (x_0,y)$
admits a development
\begin{align}\label{devpt}
p_t (x_0,y)=\frac{e^{-\frac{d^2(x_0,y)}{2t}}}{(2\pi
t)^{d/2}}\left( \sum_{k=0}^N \Psi_k(x_0,y) t^{\frac{k}{2}} + t^{\frac{N+1}{2}}
\mathbf{R}_N (t,x_0,y) \right),
\end{align}
where the remainder term $\mathbf{R}_N (t,x_0,y)$ is bounded when
$t \rightarrow 0$, $\Psi_k (x_0, \cdot)$ is a section of
$\mathbf{End} (\mathcal{E})$ defined around $x_0$ and
$d(\cdot,\cdot)$ is the distance defined around $x_0$ by the
vector fields $V_1,...,V_d$. By using the fact that for every
smooth $f : \mathbb{M} \rightarrow \mathbb{R}$,
\[
(\mathbf{Q}^N_t f) (x_0)=\mathbb{E} \left( f\left( \exp \left( \sum_{I, d(I) \le N } \Lambda_I (B)_t V_{I} \right) (x_0)
\right) \right), \quad t \ge 0 ,
\]
and classical results for asymptotic development in small times of
subelliptic heat kernels (see for instance \cite{Ben} and Chapter 3 of \cite{Bau} ), we get for
$q^N_t (x_0,y)$ a development that is similar to (\ref{devpt}).
For $\Theta^N_t (x_0)$, the scaling property of Brownian motion
implies that we have a short-time asymptotics in powers
$t^{\frac{k}{2}}$, $k \in \mathbb{N}$. Since,
\[
p^N_t (x_0,\cdot)=q_t^N(x_0, \cdot) \Theta^N_t (x_0),
\]
we deduce that
\[
p^N_t (x_0,y)=\frac{e^{-\frac{d^2(x_0,y)}{2t}}}{(2\pi
t)^{d/2}}\left( \sum_{k=0}^N \tilde{\Psi}_k(x_0,y) t^{\frac{k}{2}}  +
t^{\frac{N+1}{2}} \tilde{\mathbf{R}}_N (t,x_0,y) \right),
\]
where the remainder term $\tilde{\mathbf{R}}_N (t,x_0,y)$ is
bounded when $t \rightarrow 0$. With Theorem \ref{Parametrix}, we
obtain that $\Psi_k=\tilde{\Psi}_k$, $k=0,...,N$, and the required
estimate easily follows.
\end{proof}

\begin{remark}
The question of the smoothness of $p_t^N$ is not addressed here. It would require bounds on the inverse of the Malliavin matrix of $Z_t^N$.
\end{remark}

From the previous theorem, we deduce an explicit asymptotic expansion of $p_t (x_0,x_0)$.
If $I\in \{0,1,...,d\}^k$, $k \ge 2$, is a word, we denote
\[
\mathcal{F}_I=\nabla_I-\nabla_{V_I} \in \Gamma
(\mathbb{M},\mathbf{End}(\mathcal{E})).
\]

\begin{corollary}\label{coro}
For $N \ge 1$, when $t \rightarrow 0$,
\[
p_t (x_0,x_0)=d^N_t (x_0)\mathbb{E} \left( \exp \left( \sum_{I,
d(I) \le N } \Lambda_I (B)_t \mathcal{F}_{I}(x_0) \right) \left|
\sum_{I, d(I) \le N } \Lambda_I (B)_t V_{I}(x_0)=0\right) \right.
+ O\left( t^{\frac{N+1-d}{2}}\right),
\]
where $d^N_t(x_0)$ is the density at $0$ of the random variable $
\sum_{I, d(I) \le N } \Lambda_I (B)_t V_{I}(x_0)$.
\end{corollary}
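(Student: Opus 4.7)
The plan is to use the factorization $p_t^N(x_0,x_0) = q_t^N(x_0,x_0)\,\Theta_t^N(x_0)$ established inside the proof of Theorem \ref{main-estimation}, combined with the pointwise estimate $p_t(x_0,x_0) = p_t^N(x_0,x_0) + O(t^{(N+1-d)/2})$ from the same theorem, and then to expand each of the two factors asymptotically.

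For the scalar density $q_t^N(x_0,x_0)$, I would work in a chart around $x_0$ (for instance Riemannian normal coordinates) and Taylor-expand the random endpoint
\[
Z_t^N := \exp\!\left(\sum_{I,\, d(I)\le N}\Lambda_I(B)_t V_I\right)(x_0).
\]
Since $\Lambda_I(B)_t$ is of order $t^{d(I)/2}$ and the $V_i(x_0)$ form a basis of $T_{x_0}\mathbb{M}$, one obtains in the chart
\[
Z_t^N = x_0 + \sum_{I,\, d(I)\le N}\Lambda_I(B)_t V_I(x_0) + r_t,
\]
with $r_t$ of order $t$. A standard change-of-variables argument, combined with the smoothness of the density of $Z_t^N$ already produced by the Malliavin-matrix computation in the proof of Theorem \ref{main-estimation}, then yields $q_t^N(x_0,x_0) = d_t^N(x_0) + O(t^{(N+1-d)/2})$.

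For the endomorphism factor $\Theta_t^N(x_0)$ evaluated at $y=x_0$, I would first establish the purely algebraic identity $\nabla_I = \nabla_{V_I} + \mathcal{F}_I$ for every word $I$, by induction on $|I|$ using $[\nabla_{V_i},\mathcal{M}_f]=\mathcal{M}_{V_if}$ and the Leibniz rule for connections. Consequently $\sum_I \Lambda_I \nabla_I$ splits into a ``transport part'' $\sum_I \Lambda_I \nabla_{V_I}$, which generates parallel transport along the flow of $\sum_I \Lambda_I V_I$, and a ``potential part'' $\sum_I \Lambda_I \mathcal{F}_I$, which is a pointwise endomorphism. A Trotter-type expansion, in which all commutators between the two parts contribute higher-order terms in $t^{1/2}$, gives modulo $O(t^{(N+1)/2})$
\[
\exp\!\left(\sum_I \Lambda_I \nabla_I\right)\eta\,(x_0) \;\approx\; \exp\!\left(\sum_I \Lambda_I \mathcal{F}_I(x_0)\right)\,\tau_t^N\,\eta(Z_t^N),
\]
where $\tau_t^N$ denotes parallel transport along the random curve generated by $\sum_I \Lambda_I V_I$. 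Conditioning on $Z_t^N = x_0$ makes $\tau_t^N$ a holonomy around a loop of size $t^{1/2}$, which reduces to the identity at leading order; and since the event $Z_t^N = x_0$ is equivalent to $\sum_I \Lambda_I V_I(x_0)=0$ up to errors of order $t$ in the chart, the conditioning can be replaced by the linear one at the price of an $O(t^{(N+1)/2})$ error. This gives precisely the conditional expectation appearing in the statement.

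Combining the two asymptotic factors, and using $d_t^N(x_0) = O(t^{-d/2})$ together with boundedness of the conditional expectation, the error terms merge into $O(t^{(N+1-d)/2})$, yielding the claimed formula. The main obstacle will be Step 2: controlling quantitatively the non-commutation between $\sum_I \Lambda_I \nabla_{V_I}$ and $\sum_I \Lambda_I \mathcal{F}_I$, and swapping the nonlinear conditioning ``$Z_t^N=x_0$'' with the linear conditioning ``$\sum_I \Lambda_I V_I(x_0)=0$''. Both sub-steps must be sharp enough to produce an $O(t^{(N+1)/2})$ correction in $\Theta_t^N(x_0)$, so that after multiplication by $d_t^N(x_0)\sim t^{-d/2}$ the final error is the stated $O(t^{(N+1-d)/2})$.
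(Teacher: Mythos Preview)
Your plan is broadly aligned with the paper --- both routes go through Theorem~\ref{main-estimation} --- but the paper's argument is much shorter, and you are making the problem harder than it is.

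The paper does not split $p_t^N(x_0,x_0)$ into $q_t^N(x_0,x_0)\,\Theta_t^N(x_0)$ and then estimate each factor. Instead it disintegrates $(\mathbf P_t^N\eta)(x_0)=\mathbb E\bigl[\exp(\sum_I\Lambda_I\nabla_I)\eta\,(x_0)\bigr]$ directly over the \emph{linear} tangent-space variable $\sum_I\Lambda_I(B)_t V_I(x_0)$, and then sends $\eta$ to the Dirac mass at $x_0$. The whole thing is three lines.

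The reason this short argument works --- and the reason your Trotter expansion and ``swap of conditionings'' are unnecessary --- is an exact algebraic fact that you are treating as only approximate. Write $D=\sum_{I,\,d(I)\le N}\Lambda_I(B)_t V_I$. On the event $\{D(x_0)=0\}$, the point $x_0$ is a \emph{fixed point} of the flow $\tau\mapsto e^{\tau D}$, so the integral curve through $x_0$ is the constant curve. Consequently, on this event:
\begin{itemize}
\item $Z_t^N=e^{D}(x_0)=x_0$ exactly, so $\{Z_t^N=x_0\}=\{\sum_I\Lambda_I V_I(x_0)=0\}$ as events (for $t$ small), not merely up to $O(t)$;
\item along the constant curve the covariant-derivative part $\nabla_D$ acts as zero at $x_0$, so the ODE $\partial_\tau\Psi=(\nabla_D+F)\Psi$ with $F=\sum_I\Lambda_I\mathcal F_I$ gives $\Psi(1,x_0)=\exp\!\bigl(F(x_0)\bigr)\,\eta(x_0)$ exactly.
\end{itemize}
Hence $\Theta_t^N(x_0)$ evaluated at $y=x_0$ equals $\mathbb E\bigl[\exp(\sum_I\Lambda_I\mathcal F_I(x_0))\,\big|\,\sum_I\Lambda_I V_I(x_0)=0\bigr]$ \emph{exactly}, with no Trotter remainder, no holonomy correction, and no error from switching the conditioning. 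Your ``main obstacle'' in Step~2 disappears.

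What remains in either approach is the comparison of the scalar factors $q_t^N(x_0,x_0)$ and $d_t^N(x_0)$. This is the only genuinely asymptotic step. Your Taylor expansion $Z_t^N=x_0+\sum_I\Lambda_I V_I(x_0)+r_t$ with $r_t=O(t)$ does not by itself give the claimed $O(t^{(N+1-d)/2})$ error in the density (a naive change of variables gives only one extra power of $t^{1/2}$); one has to use, as the paper does in Step~3 of Theorem~\ref{main-estimation}, that both densities admit full asymptotic expansions of the form $t^{-d/2}\sum_k c_k t^{k/2}$ whose coefficients are forced to agree up to order $N$ by Theorem~\ref{Parametrix2}.
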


\begin{proof}
Let us first observe that for the same reason than in the proof of step 1 of the above theorem, the random process
\[
\sum_{I, d(I) \le N } \Lambda_I (B)_t V_{I}(x_0)
\]
has a density $d^N_t (x_0,\cdot)$. Therefore, due to the disintegration formula, for every smooth
 $\eta \in \Gamma (\mathbb{M}, \mathcal{E} ) $,
\[
(\mathbf{P}^N_t \eta)(x_0)=\int_{\mathbf{T}_{x_0} \mathbb{M} }\mathbb{E} \left(  \exp \left( \sum_{I, d(I) \le N
} \Lambda_I (B)_t \nabla_{I} \right) \eta  (x_0)  \left| \sum_{I, d(I) \le N } \Lambda_I (B)_t V_{I}(x_0)=0\right) \right.
 d^N_t (x_0,y) dy,
\]
and the proof follows by letting $\eta$ converge to Dirac distribution at $x_0$.
\end{proof}

The previous theorem may be used to prove various local index theorems. We focus here on the simplest one which is the Chern-Gauss-Bonnet theorem. A proof of the Atiyah-Singer local index theorem for the Dirac operator on spin manifolds can be found in \cite{Bau3}.

For alternative proofs that use stochastic differential geometry, we refer to \cite{Bi2} and \cite{Hsu}.
\subsection{The Chern-Gauss-Bonnet theorem}

Let $\mathbb{M}$ be a $d$-dimensional Riemannian, compact, smooth
manifold. The  Chern-Gauss-Bonnet theorem proved by Chern
\cite{Chern} in 1944 is the following:

\begin{theorem}
Let $\chi (\mathbb{M})$ be the Euler characteristic of
$\mathbb{M}$. If $d$ is odd, then $\chi (\mathbb{M})=0$. If $d$ is
even then
\[
\chi (\mathbb{M}) =\int_\mathbb{M} \omega (x)dx,
\]
where $\omega(x)dx$ is the Euler form, that is the volume form given in
a local orthonormal frame $e_i$ by
\[
\omega=\frac{(-1)^{d/2}}{(8\pi)^{d/2} (d/2)! } \sum_{\sigma,\tau
\in \Sigma_d} \epsilon (\sigma) \epsilon (\tau) \prod_{i=1}^{d-1}
R_{\sigma (i) \sigma (i+1) \tau (i) \tau (i+1) }dx,
\]
where  $\Sigma_d$ is the set of the permutations of the indices
$\{1,...,d\}$, $\epsilon$ the signature of a permutation, and
\[
R_{ijkl}=\left\langle R(e_j,e_k)e_l,e_i \right\rangle,
\]
with $R$ Riemannian curvature of $\mathbb{M}$.
\end{theorem}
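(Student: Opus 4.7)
The plan is to run the heat-equation proof of Chern--Gauss--Bonnet (after McKean--Singer and Patodi) using Corollary~\ref{coro} as the parametrix. Let $\mathcal{E} = \Lambda^\ast T^\ast \mathbb{M}$ equipped with its $\mathbb{Z}/2$-grading by form parity, let $\Delta = dd^\ast + d^\ast d$ be the Hodge Laplacian on $\mathcal{E}$, and write $\mathrm{str}$ for the supertrace. The McKean--Singer formula reads
\[
\chi(\mathbb{M}) = \int_\mathbb{M} \mathrm{str}\, p_t(x,x)\, dx \qquad \text{for every } t>0,
\]
where $p_t$ is the heat kernel of $-\Delta/2$. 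Since the left-hand side is independent of $t$, it is enough to identify the coefficient of $t^0$ in the $t \to 0^+$ expansion of $\mathrm{str}\, p_t(x_0,x_0)$ with the Euler density at $x_0$, and integrate.

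To put $\mathcal{L} = -\Delta/2$ into the form of Corollary~\ref{coro}, I would fix $x_0$, work in normal coordinates around $x_0$, and pick a local orthonormal frame $V_1,\dots,V_d$. Lifting the Levi-Civita connection to $\mathcal{E}$ and using Weitzenb\"ock gives
\[
\mathcal{L} = \nabla_0 + \tfrac12 \sum_{i=1}^d \nabla_i^2, \qquad \nabla_i = \nabla_{V_i} \ \ (i \ge 1),
\]
with $\mathcal{F}_i = 0$ for $i \ge 1$, a harmless drift $V_0$ coming from the frame and $\mathcal{F}_0 = -\tfrac12 W$ with $W$ the Weitzenb\"ock endomorphism. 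The length-two commutators are then $\mathcal{F}_{(i,j)} = [\nabla_i,\nabla_j] - \nabla_{[V_i,V_j]} = R(V_i,V_j)$, which acts on $\Lambda^\ast T^\ast_{x_0} \mathbb{M}$ as the Clifford-type endomorphism $\sum_{k,l} R_{ijkl}\, e^k \wedge \iota_{e_l}$.

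Next I would apply Corollary~\ref{coro} and extract the coefficient of $t^0$ after factoring out $d^N_t(x_0) \sim (2\pi t)^{-d/2}$. The Brownian scaling $\Lambda_I(B)_t \sim t^{d(I)/2}$ forces the surviving contributions inside the conditional expectation to arise from monomials $\prod_k \Lambda_{I_k}(B)_t\, \mathcal{F}_{I_k}(x_0)$ with $\sum_k d(I_k) = d$. Because $\mathcal{F}_i = 0$ for $i \ge 1$, the only admissible words are length-two ones $(i,j)$ contributing a factor $R(V_i,V_j)$ weighted by a L\'evy area $\int_0^t B^i \circ dB^j - B^j \circ dB^i$, with a combinatorial prefactor $1/(d/2)!$ from the exponential. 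Now Patodi's supertrace lemma: on $\Lambda^\ast \mathbb{R}^d$, the supertrace of a product $\prod_r c(\omega_r)$ of Clifford-type endomorphisms $c(\omega_r) = \sum_{kl}\omega^r_{kl}\, e^k \wedge \iota_{e_l}$ vanishes unless $r = d/2$ and the indices assemble into a top form. This immediately kills \emph{every} term when $d$ is odd, yielding $\chi(\mathbb{M}) = 0$, and reduces the $d$ even case to a single Berezin-type computation.

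For even $d$, one finally computes the conditional Gaussian expectation of the $(d/2)$-fold product of L\'evy areas under the conditioning $\sum_i B^i_t V_i(x_0) = 0$, which amounts to a Brownian bridge computation. Rescaling by $t$, the areas become a standard antisymmetric Gaussian matrix whose Wick moments, paired with the Berezin/Pfaffian identity $\mathrm{str}\,\exp\bigl(\tfrac12\sum_{ij} A_{ij}\, e^i \wedge \iota_{e_j}\bigr) = (-1)^{d/2} \mathrm{Pf}(A)$ applied term by term, produce exactly the double skew sum $\sum_{\sigma,\tau \in \Sigma_d} \epsilon(\sigma)\epsilon(\tau)\prod_{i} R_{\sigma(i)\sigma(i+1)\tau(i)\tau(i+1)}$. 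The main obstacle is precisely this last step: it is a matter of bookkeeping, but one needs the $(2\pi)^{-d/2}$ from the tangent-space density, the $1/(d/2)!$ from the exponential, the L\'evy-area variance factors, and the sign $(-1)^{d/2}$ from the Berezin identity to collapse cleanly into the stated normalization $(-1)^{d/2}/\bigl((8\pi)^{d/2}(d/2)!\bigr)$ of the Euler form $\omega(x_0)$. Integration over $\mathbb{M}$ then completes the proof.
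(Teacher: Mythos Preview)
Your setup (McKean--Singer, Weitzenb\"ock, and the appeal to Corollary~\ref{coro}) matches the paper, but the identification of the surviving term is wrong, and this is where the proof breaks.

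You correctly observe that scaling forces $\sum_k d(I_k)=d$, with each $d(I_k)\ge 2$, hence at most $d/2$ factors. You then write ``the only admissible words are length-two ones $(i,j)$'', and build the rest of the argument on L\'evy-area-weighted curvatures $\mathcal{F}_{(i,j)}=R(V_i,V_j)$. This overlooks the word $I=(0)$, which has $d(I)=2$ and contributes $\Lambda_{(0)}(B)_t\,\mathcal{F}_0 = t\cdot(-\tfrac12\mathcal{F})$ with $\mathcal{F}=-\sum_{ijkl}R_{ijkl}\,a_i^\ast a_j a_k^\ast a_l$. That omission is fatal, because in the Fermion calculus the two kinds of contributions have different degrees: $\mathcal{F}_{(i,j)}$ acts on forms as $\sum_{kl}R_{klij}\,a_k^\ast a_l$, which is \emph{quadratic} (one creation, one annihilation), whereas $\mathcal{F}_0$ is \emph{quartic}. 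By formula~(\ref{supertraceformula}) a nonzero supertrace requires the $A_{\{1,\dots,d\}\{1,\dots,d\}}$ coefficient, i.e.\ $d$ creation and $d$ annihilation operators. A $d/2$-fold product of quadratic elements $\mathcal{F}_{(i,j)}$ carries at most $d/2$ of each, so its supertrace is identically zero; the same counting kills every mixed product containing at least one $\mathcal{F}_{(i,j)}$. The \emph{only} $d/2$-fold product with nonvanishing supertrace is $(t\mathcal{F}_0)^{d/2}$. Consequently your ``Patodi supertrace lemma'' as stated (nonvanishing for $d/2$ factors of $\sum_{kl}\omega_{kl}\,e^k\wedge\iota_{e_l}$) is false---already for $d=2$ one checks directly that $\mathbf{Str}(a_1^\ast a_2)=\mathbf{Str}(a_2^\ast a_1)=0$---and the Pfaffian identity you quote, with $e^i\wedge\iota_{e_j}$ in the exponent, is not the Berezin identity (that one has $e^i\wedge e^j$, i.e.\ two creations, in the exponent).

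The paper's proof proceeds exactly by this Fermion-degree counting: after reducing to the $d/2$-th power via scaling and (\ref{supertraceformula}), only the deterministic term $\frac{1}{(d/2)!}\,\mathbf{Str}\bigl(t\mathcal{F}_0(x_0)\bigr)^{d/2}$ survives inside the conditional expectation, so no Brownian-bridge L\'evy-area moments enter at all. Combining with $q^d_t(x_0)\sim(2\pi t)^{-d/2}$ gives $\lim_{t\to 0}\mathbf{Str}\,p_t(x_0,x_0)=\frac{(-1)^{d/2}}{(d/2)!(4\pi)^{d/2}}\mathbf{Str}\,\mathcal{F}(x_0)^{d/2}$, and an algebraic computation (Rosenberg, Lemma~2.35) identifies this with $\omega(x_0)$. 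So the Euler density comes from the Weitzenb\"ock potential, not from the holonomy/L\'evy-area contributions you tried to isolate.
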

The striking feature of Chern-Gauss-Bonnet theorem that makes it
so beautiful is that the Euler form depends on the Riemannian
metric whereas  $\chi (\mathbb{M}) $ is only a topological
invariant. We now turn to a short proof of it that uses the tools we developed in these notes. In the sequel, we always assume that the dimension $d$ is even.

\

We first briefly recall some basic facts on
Fermion calculus on the Clifford exterior algebra of a finite
dimensional vector space, as can be found in Section 2.2.2 of
\cite{Ros} and that will be used in our proof. Let $V$ be a
$d$-dimensional Euclidean vector space. We denote $V^\ast$ its
dual and
\[
\wedge V^\ast=\bigoplus_{k \ge 0} \wedge^k V^\ast,
\]
the exterior algebra. If $u \in V^\ast$, we denote $a^\ast_u$ the
map $\wedge V^\ast \rightarrow \wedge V^\ast$, such that $a^\ast_u
(\omega)=u \wedge \omega$. The dual map is denoted $a_u$. Let now
$\theta_1$, ..., $\theta_d$ be an orthonormal basis of $V^\ast$.
We denote $a_i=a_{\theta_i}$. We have the basic rules of Fermion
calculus
\[
\{ a_i ,a_j \}=0, \{ a^\ast_i ,a^\ast_j \}=0, \{ a_i ,a^\ast_j
\}=\delta_{ij},
\]
where $\{ \cdot , \cdot \}$ stands for the anti-commutator: $\{
a_i ,a_j \}=a_i a_j +a_j a_i$. If $I$ and $J$ are two words with
$1 \le i_1 < \cdots < i_k \le d$ and $1 \le j_1 < \cdots < j_l \le
d$, we denote
\[
A_{IJ}=a^\ast_{i_1} \cdots a^\ast_{i_k}   a_{j_1} \cdots a_{j_l}.
\]
The family of all the possible $A_{IJ}$ forms a basis of the
$2^{2d}$-dimensional vector space $\mathbf{End} \left( \wedge
V^\ast\right)$.  

 If $A = \sum_{I,J} c_{IJ} A_{IJ} \in \mathbf{End} \left( \wedge V^\ast\right)$, we shall say that 

If $A \in \mathbf{End} \left( \wedge
V^\ast\right)$, we define the supertrace $\mathbf{Str} (A)$ as the
difference of the trace of $A$ on even forms minus the trace of
$A$ on odd forms. 

One of the the interests of Fermion calculus (which is
equivalent to Clifford calculus) is that it makes easy to compute  supertraces: If $A = \sum_{I,J} c_{IJ} A_{IJ}$,
then
\begin{align}\label{supertraceformula}
\mathbf{Str} (A)=(-1)^{\frac{d(d-1)}{2}} c_{\{1,...,d\}\{1,...,d\}}.
\end{align}


We now carry the Fermionic construction on the tangent spaces of
our manifold $\mathbb{M}$. Let $e_i$ be a local orthonormal frame
and let $\theta_i$ be its dual frame. The curvature endomorphism
is defined by
\[
\mathcal{F}=-\sum_{ijkl} R_{ijkl}a_i^\ast a_j a_k^\ast a_l
\]
where
\[
R_{ijkl}=\left\langle R(e_j,e_k)e_l,e_i \right\rangle,
\]
with $R$ Riemannian curvature of $\mathbb{M}$. This definition is
actually intrinsic, i.e. does not depend on the choice of the
local frame. In this setting, the celebrated Weitzenb\"ock formula
reads
\[
\square=\Delta + \mathcal{F},
\]
where $\square =d d^\ast +d^\ast d$ is the Hodge-DeRham Laplacian
and $\Delta$ the Bochner Laplacian. Let us recall that if $e_i$ is
a local orthonormal frame, we have the following explicit formula
for $\Delta$:
\[
\Delta=-\sum_{i=1}^d (\nabla_{e_i}\nabla_{e_i}- \nabla_{
\nabla_{e_i} e_i }),
\]
where $\nabla$ is the Levi-Civita connection.

\

After these preliminaries, we can now turn to the proof of
Chern-Gauss-Bonnet theorem. From now on, we suppose that the
dimension $d$ is even. The first crucial step is McKean-Singer
formula \cite{McK-Sin} (A simple proof of it can be found in \cite{Ros} page 113). We have
\[
\chi(\mathbb{M})=\int_{\mathbb{M}}\mathbf{Str}\text{ } p_t (x,x) dx
, \quad t >0,
\]
where $\mathbf{P}_t=e^{-t \square}$ and $p_t$ is the corresponding
Schwartz kernel (density). In other words, the supertrace is constant
along the heat semigroup associated with the Hodge-DeRham
 Laplacian and this constant is equal to the Euler characteristic. 
 
 An easily proved and non surprising
precise statement is the following: When $t \rightarrow 0$,
\[
\sup_{x \in \mathbb{M}} \parallel p_t (x,x) - \frac{1}{(4 \pi
t)^{\frac{d}{2}}}e^{-t \mathcal{F}}(x) \parallel =O\left(
\frac{1}{t^{d/2 -1}}\right).
\]
But as seen in the next proposition, due to Corollary \ref{coro}, \textit{fantastic and subtle
cancellations}\footnote{We quote here McKean-Singer \cite{McK-Sin}
who conjectured these cancellations of terms that involve many
covariant derivatives of curvature terms.} occur at the paths
level when we take the supertrace:

\begin{proposition}\label{local-Chern}
For every $x \in \mathbb{M}$, \[
\lim_{t \rightarrow 0}\mathbf{Str}\text{ } p_t (x,x)=\omega (x).
\]
\end{proposition}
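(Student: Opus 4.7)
The plan is to apply Corollary \ref{coro} to the operator $\mathcal{L}=-\square$ on the exterior bundle $\mathcal{E}=\wedge T^*\mathbb{M}$, read off the short-time behaviour of $p_t(x,x)$, and then exploit the Fermion formula (\ref{supertraceformula}) to isolate the contribution that survives the supertrace. At the fixed point $x\in\mathbb{M}$ I would choose a local orthonormal frame $e_1,\dots,e_d$ with vanishing Christoffel symbols at $x$, set $V_i=\sqrt{2}\,e_i$ and $\mathcal{F}_i=0$ for $1\le i\le d$, and use the Weitzenb\"ock identity to absorb the remaining first- and zeroth-order parts of $-\square$ into $V_0$ and $\mathcal{F}_0=-\mathcal{F}$. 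A short computation then gives $\mathcal{F}_{(i,j)}=2R(e_i,e_j)$, which acts on $\wedge T^*_x\mathbb{M}$ as a $2$-Fermionic endomorphism of the form $\sum_{kl}R_{klij}(a_k^*a_l-a_l^*a_k)/2$.

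Next I would take $N\ge d$ in Corollary \ref{coro} so that the remainder $O(t^{(N+1-d)/2})$ is $o(1)$. Using the Brownian scaling $\Lambda_I(B)_t\stackrel{d}{=}t^{d(I)/2}\Lambda_I(B)_1$, only the words with $d(I)\le 2$ can contribute to the limit: the $d(I)=1$ words give the Gaussian that is being conditioned to zero, the $d(I)=2$ words produce $t\mathcal{F}_0(x)$ together with Lévy-area-weighted curvatures, and every word with $d(I)\ge 3$ contributes terms of order $o(1)$ once the supertrace is applied. Simultaneously, the prefactor $d^N_t(x)$ reduces, to leading order, to the density at $0$ of the Gaussian $\sum_i B_t^iV_i(x)$, namely $(4\pi t)^{-d/2}$.

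The conditional expectation is then a Gaussian integral to which the Lévy-area formula stated in Section 4 applies directly. Running it through yields the Mehler-type expression
\[
p_t(x,x)=\frac{1}{(4\pi t)^{d/2}}\,\det{}^{1/2}\!\!\left(\frac{t\mathbf{R}/2}{\sinh(t\mathbf{R}/2)}\right)\exp(-t\mathcal{F})(x)+o(1),
\]
where $\mathbf{R}=(R(e_i,e_j))_{i,j}$ is the curvature seen as a skew-symmetric matrix with values in $2$-Fermionic endomorphisms. This reduces the proposition to a purely algebraic identity on $\wedge T^*_x\mathbb{M}$.

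The last step is to compute $\mathbf{Str}$ of this expression. By (\ref{supertraceformula}), only the coefficient of $a_1^*\cdots a_d^*a_1\cdots a_d$ contributes. A Fermion count shows that exactly $d/2$ factors of $\mathbf{R}$ are needed to saturate the $2d$ Fermions, and each such factor carries one power of $t$; the single surviving term is therefore of order $t^{d/2}\cdot t^{-d/2}=1$, which already explains why the singular factor $(4\pi t)^{-d/2}$ cancels out. A standard Berezin-integration (or Mathai--Quillen) computation then identifies the resulting coefficient with the Pfaffian of $\mathbf{R}$ and produces the constant $(-1)^{d/2}/((8\pi)^{d/2}(d/2)!)$, so that $\lim_{t\to 0}\mathbf{Str}\,p_t(x,x)$ is exactly the expression defining $\omega(x)$. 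The main obstacle is precisely this last algebraic identification: one has to track carefully how the Mehler determinant, the potential $\exp(-t\mathcal{F})(x)$, the $(4\pi t)^{-d/2}$ factor, and the fermionic Pfaffian expansion combine to produce the exact constant appearing in the Euler form, and to confirm rigorously that all Chen words with $d(I)\ge 3$ contribute only at subleading order after the supertrace is taken.
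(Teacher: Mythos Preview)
Your route is genuinely different from the paper's. The paper does \emph{not} pass through any Mehler kernel or L\'evy-area computation; it gets away with a pure Fermion-counting argument that is considerably shorter.

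Here is the comparison. After applying Corollary~\ref{coro} with $N=d$ and expanding the exponential, the paper observes that every $\mathcal{F}_I(x_0)$ with $d(I)\le d$ is a linear combination of two-Fermion terms $a_i^*a_j$ and four-Fermion terms $a_i^*a_ja_k^*a_l$. By formula~(\ref{supertraceformula}), the supertrace of the $k$-th power of $\sum_I\Lambda_I(B)_t\mathcal{F}_I(x_0)$ therefore vanishes for every $k<d/2$: one cannot reach the top monomial $a_1^*\cdots a_d^*\,a_1\cdots a_d$ with fewer than $d/2$ four-Fermion factors. For $k>d/2$, Brownian scaling (each surviving $\Lambda_I$ has $d(I)\ge 2$, hence carries at least one power of~$t$) makes the contribution $O(\sqrt t)$ after multiplying by the density $q_t^d(x_0)\sim t^{-d/2}$. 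Only $k=d/2$ remains, and in that power every factor must contribute its four-Fermion part; among those, scaling again isolates $\mathcal{F}_0$ (the unique four-Fermion $\mathcal{F}_I$ with $d(I)=2$), leaving the single term
\[
\mathbf{Str}\,p_{t/2}(x_0,x_0)=\frac{t^{d/2}}{(d/2)!}\,q_t^d(x_0)\,\mathbf{Str}\,\mathcal{F}_0(x_0)^{d/2}+O(\sqrt t).
\]
The identification with $\omega(x_0)$ is then the classical algebraic formula for $\mathbf{Str}\,\mathcal{F}^{d/2}$, quoted from Rosenberg.

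Your Mehler approach can in principle be pushed through, but as written it has a real gap: the L\'evy-area formula in Section~4 is stated for a \emph{scalar} skew-symmetric matrix, whereas your $\mathbf{R}=(R(e_i,e_j))_{ij}$ takes values in endomorphisms of $\wedge T^*_x\mathbb{M}$ that do not commute with one another, so the expression $\det^{1/2}\!\bigl(\tfrac{t\mathbf{R}/2}{\sinh(t\mathbf{R}/2)}\bigr)$ is only formal. Giving it rigorous meaning is exactly the graded/Berezin argument you defer to the ``main obstacle''. The point of the paper's proof is that this whole layer is unnecessary: the two-Fermionic L\'evy-area curvatures $\mathcal{F}_{(i,j)}$ you are trying to resum can never contribute to the top monomial in~(\ref{supertraceformula}), so they drop out of the supertrace for free and the Mehler determinant never enters.
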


\begin{proof}
Let $x_0 \in \mathbb{M}$ be fixed once time or all in the following proof. We work in a synchronous local orthonormal frame $e_i$ around $x_0$, that is $\nabla e_i =0$ at $x_0$.
At the point $x_0$, we have
\[
\Delta=-\sum_{i=1}^d \nabla_{e_i}\nabla_{e_i}
\] 
and therefore
\[
\square=-\sum_{i=1}^d \nabla_{e_i}\nabla_{e_i} + \mathcal{F} (x_0).
\]
We want to apply Corollary \ref{coro} in the present framework. 

We denote $\mathcal{F}_0=-\frac{1}{2} \mathcal{F}$, $\mathcal{F}_i=0$, $1\le i \le d$ and if $I\in \{0,1,...,d\}^k$ is a word, 
\[
\mathcal{F}_I= [\nabla_{i_1},[\nabla_{i_2},...,[\nabla_{i_{k-1}},
\nabla_{i_{k}}]...]-\nabla_{ [e_{i_1},[e_{i_2},...,[e_{i_{k-1}},
e_{i_{k}}]...]} \in \Gamma
(\mathbb{M}, \mathbf{End} \left( \wedge
T^\ast \mathbb{M} \right)),
\]
with the convention $e_0=0$, $\nabla_0=\mathcal{F}_0$, $\nabla_i =\nabla_{e_i}$, $1\le i \le d$.
According to Corollary \ref{coro}, we thus have for $N \ge 1$, and $t \rightarrow 0$,
\[
p_{t/2} (x_0,x_0)=q^N_t (x_0)\mathbb{E} \left( \exp \left( \sum_{I,
d(I) \le N } \Lambda_I (B)_t \mathcal{F}_{I}(x_0) \right) \left|
\sum_{I, d(I) \le N } \Lambda_I (B)_t e_{I}(x_0)=0\right) \right.
+ O\left( t^{\frac{N+1-d}{2}}\right),
\]
where $q^N_t(x_0)$ is the density at $0$ of the random variable $
\sum_{I, d(I) \le N } \Lambda_I (B)_t e_{I}(x_0)$. Applying this when $N=d$ gives
\[
p_{t/2} (x_0,x_0)=q^d_t (x_0)\mathbb{E} \left( \exp \left( \sum_{I,
d(I) \le d } \Lambda_I (B)_t \mathcal{F}_{I}(x_0) \right) \left|
\sum_{I, d(I) \le N } \Lambda_I (B)_t e_{I}(x_0)=0\right) \right.
+ O\left( \sqrt{t} \right).
\]

By using the scaling property of Brownian motion, it is easily seen that for $k \ge \frac{d}{2}+1$,
\[
q^d_t (x_0)\mathbb{E} \left(  \left( \sum_{I,
d(I) \le d } \Lambda_I (B)_t \mathcal{F}_{I}(x_0) \right)^k \left|
\sum_{I, d(I) \le N } \Lambda_I (B)_t e_{I}(x_0)=0\right) \right.=O\left( \sqrt{t} \right).
\]
Therefore
\[
p_{t/2} (x_0,x_0)=q^d_t (x_0)\mathbb{E} \left( \sum_{k=0}^{d/2} \frac{1}{k!}  \left( \sum_{I,
d(I) \le d } \Lambda_I (B)_t \mathcal{F}_{I}(x_0) \right)^k \left|
\sum_{I, d(I) \le N } \Lambda_I (B)_t e_{I}(x_0)=0\right) \right.
+ O\left( \sqrt{t} \right)
\]
and
\[
\mathbf{Str}\text{ }p_{t/2} (x_0,x_0)=q^d_t (x_0)\mathbb{E} \left( \sum_{k=0}^{d/2} \frac{1}{k!}  \mathbf{Str}\text{ }\left( \sum_{I,
d(I) \le d } \Lambda_I (B)_t \mathcal{F}_{I}(x_0) \right)^k \left|
\sum_{I, d(I) \le N } \Lambda_I (B)_t e_{I}(x_0)=0\right) \right.
+ O\left( \sqrt{t} \right).
\]

Since a routine computation shows that $
 \sum_{I,
d(I) \le d } \Lambda_I (B)_t \mathcal{F}_{I}(x_0)$ may be written as a linear combination of terms $a^*_i a_j$, $a^*_i a_j a_k^* a_l$, due to the formula (\ref{supertraceformula}), we have for $k\le \frac{d}{2}-1$,
\[
\mathbf{Str}\text{ }\left( \sum_{I,
d(I) \le d } \Lambda_I (B)_t \mathcal{F}_{I}(x_0) \right)^k =0.
\]
Consequently,
\[
\mathbf{Str}\text{ }p_{t/2} (x_0,x_0)=q^d_t (x_0)\mathbb{E} \left(  \frac{1}{(d/2)!}  \mathbf{Str}\text{ }\left( \sum_{I,
d(I) \le d } \Lambda_I (B)_t \mathcal{F}_{I}(x_0) \right)^{\frac{d}{2}} \left|
\sum_{I, d(I) \le N } \Lambda_I (B)_t e_{I}(x_0)=0\right) \right.
+ O\left( \sqrt{t} \right).
\]
By using again the scaling property of Brownian motion, we have
\begin{align*}
 & q^d_t (x_0)\mathbb{E} \left(  \frac{1}{(d/2)!}  \mathbf{Str}\text{ }\left( \sum_{I,
d(I) \le d } \Lambda_I (B)_t \mathcal{F}_{I}(x_0) \right)^{\frac{d}{2}} \left|
\sum_{I, d(I) \le N } \Lambda_I (B)_t e_{I}(x_0)=0\right) \right. \\
= &q^d_t (x_0)\mathbb{E} \left(  \frac{1}{(d/2)!}  \mathbf{Str}\text{ }\left( t \mathcal{F}_{0}(x_0) \right)^{\frac{d}{2}} \left|
\sum_{I, d(I) \le N } \Lambda_I (B)_t e_{I}(x_0)=0\right) \right.
 + O\left( \sqrt{t} \right).
 \end{align*}
 We finally end up with
 \[
 \mathbf{Str}\text{ }p_{t/2} (x_0,x_0)= \frac{1}{(d/2)!} t^{\frac{d}{2}} q^d_t (x_0)  \mathbf{Str}\text{ } \mathcal{F}_{0}(x_0)^{\frac{d}{2}} + O\left( \sqrt{t} \right),
 \]
 which proves that
 \[
 \lim_{t \to 0} \mathbf{Str}\text{ }p_{t} (x_0,x_0) =\frac{(-1)^{d/2} }{(d/2)! (4\pi)^{d/2}} \mathbf{Str}\text{ } \mathcal{F}(x_0)^{\frac{d}{2}}.
 \]
 Heavy, but straightforward computations (see \cite{Ros}, Lemma 2.35) show that
 \[
 \frac{(-1)^{d/2} }{(d/2)! (4\pi)^{d/2}} \mathbf{Str}\text{ } \mathcal{F}(x_0)^{\frac{d}{2}} =\omega(x_0).
 \]
\end{proof}

\end{document}